\newcommand{\watermark}[3]{\AddToShipoutPictureBG{
\parbox[b][\paperheight]{\paperwidth}{
\vfill%
\centering%
\tikz[remember picture, overlay]%
  \node [rotate = #1, scale = #2] at (current page.center)%
    {\textcolor{gray!80!cyan!30}{#3}};
\vfill}}}
\newcommand{\bn}{{\bm n}}
\newcommand{\bx}{{\bm x}}
\newcommand{\pK}{{\partial K}}
\def\T{{\mathcal T}}
\def\E{{\mathcal E}}
\def\l{{\langle}}
\def\r{{\rangle}}
\def\3bar{{|\hspace{-.02in}|\hspace{-.02in}|}}
\def\ljump{{[\hspace{-0.02in}[}}
\def\rjump{{]\hspace{-0.02in}]}}
\def\la{{\{\hspace{-0.045in}\{}}
\def\ra{{\}\hspace{-0.045in}\}}}
\newtheorem{remark}{Remark}[section]
\newtheorem{example}{\bf Example}
\newtheorem{algorithm}{Method}
\title{A stabilizer-free $C^0$ weak Galerkin method for the biharmonic equations}
\author{Peng Zhu\thanks{College of
Data Science, Jiaxing University, Jiaxing, Zhejiang 314001, China
(zhupeng.hnu@gmail.com). This research was supported in part by Zhejiang Provincial Natural Science Foundation of China (LY19A010008) and Natural Science Foundation of China (12071184).}
\and
Shenglan Xie\thanks{College of Information Engineering, Jiaxing Nanhu University, Jiaxing, Zhejiang 314001, China (shlxie@126.com).}
\and
Xiaoshen Wang\thanks{Department of
Mathematics, University of Arkansas at Little Rock, Little Rock, AR
72204 (xxwang@ualr.edu).}
}
\begin{document}

\maketitle

\begin{abstract}
In this article, we present and analyze a stabilizer-free $C^0$ weak Galerkin (SF-C0WG) method for solving the biharmonic problem. The SF-C0WG method is formulated  in terms of cell unknowns which are $C^0$ continuous piecewise polynomials of degree $k+2$ with $k\geq 0$ and in terms of face unknowns which are discontinuous piecewise polynomials of degree $k+1$. The formulation of this SF-C0WG method is without the stabilized or penalty term and is as simple as the $C^1$ conforming finite element scheme of the biharmonic problem. Optimal order error estimates in a discrete $H^2$-like norm and the $H^1$ norm for $k\geq 0$ are established for the corresponding WG finite element solutions. Error estimates in the $L^2$ norm are also derived with an optimal order of convergence for $k>0$ and sub-optimal order of convergence for $k=0$. Numerical experiments are shown to confirm the theoretical results.
\end{abstract}

\begin{keywords}
weak Galerkin, finite element method, weak Laplacian,
biharmonic equations
\end{keywords}

\begin{AMS}
Primary, 65N15, 65N30, 76D07; Secondary, 35B45, 35J50
\end{AMS}
\pagestyle{myheadings}

\watermark{60}{3}{It has been accepted for publication in 
 SCIENCE CHINA Mathematics.}

\section{Introduction}

We consider the biharmonic equation of the form
\begin{subequations}
\begin{eqnarray}
\Delta^2 u&=&f,\quad \mbox{in}\;\Omega,\label{pde}\\
u&=&g_D,\quad\mbox{on}\;\Gamma,\label{pde-bc1}\\
\frac{\partial u}{\partial
\bm{n}}&=&g_N, \quad\mbox{on}\;\Gamma,\label{pde-bc2}
\end{eqnarray}
\end{subequations}
where $\Omega$ is a bounded polytopal domain in $\mathbb{R}^2$ and $\Gamma=\partial\Omega$.

In the case of homogeneous boundary conditions $g_D=g_N=0$, the variational form of problem (\ref{pde})-(\ref{pde-bc2}) reads as: find $u\in H_0^2(\Omega)$ such that
\begin{equation}\label{wf}
(\Delta u, \Delta v) = (f, v),\qquad \forall v\in H_0^2(\Omega),
\end{equation}
where $H_0^2(\Omega)$ is the subspace of $H^2(\Omega)$ consisting of
functions with vanishing value and normal derivative on
$\partial\Omega$.

For the case of nonhomogeneous boundary conditions, assume that $g_D$ and $g_N$ are the Dirichlet boundary data of some function in $H^2(\Omega)$, that is, there exists $\psi\in H^2(\Omega)$ such that
\begin{align*}
\Delta^2 \psi&=0,\quad \mbox{in}\;\Omega,\\
\psi&=g_D,\quad\mbox{on}\;\Gamma,\\
\frac{\partial \psi}{\partial
\bm{n}}&=g_N, \quad\mbox{on}\;\Gamma.
\end{align*}
Then by setting $\widetilde{u}=u-\psi$, we arrive at the weak form (\ref{wf}) for $\widetilde{u}$. Therefore for brevity, but without loss of generality, we will assume homogeneous boundary conditions in the remainder of this paper.

It is well known that $H^2$-conforming finite element methods for problem (\ref{pde})-(\ref{pde-bc2}) involve $C^1$ finite elements, which are of complex implementation and contain high order polynomials even in two dimensions. For example, Argyris and Bell finite elements have 21 and 18 degrees of freedom per triangle, respectively.

In order to avoid the use of such $C^1$ elements, nonconforming finite elements have been used to solve biharmonic problems.
Morley element \cite{morley} is one of the most popular nonconforming finite elements for the biharmonic equations, which only uses quadratic piecewise polynomials on triangle elements in two dimensional domains and doesn't need any stabilization along mesh interfaces. However, it can not be generalized to arbitrarily high order polynomials.

Discontinuous Galerkin (DG) approaches can also be applied to the biharmonic problems.
The first discontinuous Galerkin method---the interior penalty method for the fourth order PDE was presented in \cite{baker}, which uses fully discontinuous piecewise polynomials as basis functions. A nonsymmetric version of interior penalty method was proposed and analyzed in \cite{ms}. Although the DG methods have the advantage of using arbitrarily high-order elements, they also have some disadvantages. The weak forms are more complicated than those used for conforming and nonconforming finite element methods. The discrete linear system of the DG method is large because it has a large number of degrees of freedom.
To reduce the degrees of freedom of DG methods, $C^0$ interior penalty (C0IP) methods have been proposed for the fourth order PDEs first in \cite{engel} and then analyzed in \cite{bs}, where the simple Lagrange elements are used and the continuity of the function derivatives are weakly enforced by stabilization terms on interior edges. However, the C0IP methods still have the disadvantage of complex weak form and the need for the penalty parameters.

Another approach to avoid the use of $C^1$ elements is the mixed methods \cite{ab,falk,monk}, which reduces the biharmonic problem to a system of two second order elliptic problems. One of the main drawbacks of the mixed formulation is that the mixed method leads to saddle-point linear system, {which causes difficulty in efficiently solving the linear algebra system.}

The weak Galerkin (WG) finite element method was first introduced for the second order elliptic problems in \cite{wy}. One of its main characteristics is the use of the concept of weak functions and its weak derivatives. The classical differential operators, such as the gradient and the Laplacian, are approximated by weak differential operator defined as distributions, which are further approximated by piecewise polynomials. These weakly defined functions and differential operators make the WG methods highly flexible in choosing finite element spaces and using polytopal meshes. In recent years, the WG method has been a focus of great interest in the scientific community. Several WG methods have been developed to solve a wide variety of partial differential equations, e.g., \cite{wy-mixed,lyzz,gm,ggz,mwy-helm,mwyz-maxwell,mwyz-interface}. Especially, there are some works \cite{wg-bi1,wg-bi2,wg-bi3,wg-bi4,wg-bi5,wg-bi6,wg-bi7} for biharmonic equations. Compared with the DG methods, there is no penalty parameters needed to tune in the formulation of WG methods. Similar to the DG methods, the WG methods also involve stabilization along mesh skeleton, which makes the implementation of DG and WG methods more complex than the ones of conforming and nonconforming finite element methods.

Most recently, a new WG method without stabilizer term was presented for the second order elliptic problems in \cite{yz}, where we can remove the stabilization and pay the price in the form of using high enough degree of polynomials in the definition of the weak gradient. The resulting numerical scheme is as simple as conforming finite element scheme and it is easy to implement. The idea has been extended to the biharmonic equations in \cite{sf-wg}, where a stabilizer-free WG (SFWG) method has been proposed which uses full discontinuous piecewise polynomials of degrees $k+2$, $k+2$ and $k+1$ with $k\geq 0$, respectively, for discretization of the unknown solution $u$, the trace of $u$ and the trace of normal derivative $\frac{\partial u}{\partial n}$ on the skeleton of the mesh.
For triangular mesh, the minimum degree of polynomials used for the computation of the weak Laplacian is $k+7$ in theory and is $k+4$ in practical computation. As it is pointed out in \cite{sf-wg}, it is a challenging task to compute weak Laplacian and its numerical integration when the degree of polynomials used in the computation of weak Laplacian is very high.

In this paper, we will present and analyze a stabilizer-free $C^0$ weak Galerkin method to approximate the solutions of the biharmonic problem (\ref{pde})- (\ref{pde-bc2}). The method is formulated in terms of face unknowns which are discontinuous piecewise polynomials of degree $k+1$ with $k\geq 0$ and in terms of cell unknowns which are $C^0$ continuous piecewise polynomials of degree $k+2$. We have proved that, for triangular mesh, it is enough to take $k+3$ as the degree of polynomials used in the computation of weak Laplacian.
In comparison with the SFWG method \cite{sf-wg}, the SF-C0WG methods in this paper involve fewer degrees of freedom because nodal values are shared on inter-element boundaries.

The outline of this paper is as follows. In Section \ref{Sec:wg-fem}, we introduce some notations and the formulation of our SF-C0WG method and the related methods. Two energy-like norms and their equivalence and the well-posedness of the SF-C0WG method are discussed in Section \ref{Sec:well-pose}. Then, in Section 4, we derive an error equation which plays an important role in our error estimates. The error analysis of our SF-C0WG method for $H^2$-like norm and the $L^2$ and $H^1$ norm are established in Section \ref{Sec:H2-err} and \ref{Sec:L2-err}, respectively. Finally, in Section \ref{Sec:numeric}, we report some numerical experiment results to confirm the theoretical analysis developed.

\section{Weak Galerkin Finite Element Methods}\label{Sec:wg-fem}

Let ${\mathcal T}_h$ be a quasi-uniform triangulation of the domain $\Omega$.
Denote by ${\cal E}_h$ the set of all edges in ${\cal
T}_h$, and let ${\cal E}_h^0={\cal E}_h\backslash\Gamma$ be
the set of all interior edges.

For convenience, we adopt the following notations,
\begin{eqnarray*}
(v,w)_{\T_h} &=& \sum_{K\in\T_h}(v,w)_K=\sum_{K\in\T_h}\int_K vw d\bx,\\
 \l v,w\r_{\partial\T_h}&=&\sum_{K\in\T_h} \l v,w\r_\pK=\sum_{K\in\T_h} \int_\pK vw ds.
\end{eqnarray*}

{\color{red}For any nonnegative integer $m$}, let $\mathbb{P}_m(D)$ denote the set of polynomials defined on $D$ with degree no more than $m$, where $D$ may be an element $K$ of $\T_h$ or an edge $e$ of $\E_h$. In what follows, we often consider the broken polynomial spaces
\[\mathbb{P}_m(\T_h):=\{v\in L^2(\Omega): v|_K\in \mathbb{P}_m(K), \,\,\forall K\in\T_h\},\]
and
\[\mathbb{P}_m(\E_h):=\{v\in L^2(\E_h): {\color{red}v|_e}\in \mathbb{P}_m(e), \,\,\forall e\in\E_h\}.\]

First of all, we introduce a set of normal directions on ${\cal E}_h$ as follows
\begin{equation}\label{thanks.101}
{\cal D}_h = \{\bm{n}_e: \mbox{ $\bn_e$ is unit and normal to $e$},\
e\in {\cal E}_h \}.
\end{equation}
Then, a weak Galerkin finite element space $V_h$ for $k\geq 0$ is defined by
\begin{equation}
V_h=\{v=\{v_0, v_{n}\bn_e\}:\ v_0\in S_h,
 v_{n}\in \mathbb{P}_{k+1}(\E_h)\},
\end{equation}
with
\begin{align}\label{Sh}
S_h=\{w\in H_0^1(\Omega): w|_K\in \mathbb{P}_{k+2}(K), \,\, \forall K\in\mathcal{T}_h\},
\end{align}
where $v_n$ can be viewed as an approximation of $\frac{\partial v_0}{\partial \bn_e}:=\nabla v_0\cdot\bn_e$.

Denote by $V_h^0$ a subspace of $V_h$ with vanishing traces,
\begin{align}\label{Vh0}
V_h^0=\{v=\{v_0,v_{n}\bn_e\}\in V_h, \ v_{n}|_e=0,\
e\subset\partial K\cap\Gamma\}.
\end{align}

\begin{definition}[Weak Laplacian] For any function $v=\{v_0, v_n\bn_e\}\in V_h$, its weak Laplacian $\Delta_{w,m}v$,
is piecewisely defined as the unique polynomial $(\Delta_{w,m}v)|_K \in \mathbb{P}_{m}(K)$ such that
\begin{equation}\label{wl}
(\Delta_{w,m} v, \ \varphi)_K = -( \nabla v_0, \ \nabla\varphi)_K+\l v_n\bn_e\cdot\bn, \ \varphi\r_{\partial K},\quad
\forall \varphi\in \mathbb{P}_{m}(K),
\end{equation}
for any $K\in\mathcal{T}_h$.
\end{definition}

Now, we are ready to present our stabilizer-free $C^0$ weak Galerkin finite element method for the biharmonic problem (\ref{pde})-(\ref{pde-bc2}).

\begin{algorithm}[SF-C0WG Method]
The stabilizer-free $C^0$ weak Galerkin finite element scheme for solving problem (\ref{pde})-(\ref{pde-bc2}) is defined as follows: find $u_h=\{u_0,u_{n}\bn_e\}\in V_h^0$ such that
\begin{equation}\label{wg}
\mathcal{A}_h(u_h, v_h)=(f,\;v_0), \quad\forall\
v_h=\{v_0, v_{n}\bn_e\}\in V_h^0,
\end{equation}
where the bilinear form $a_h(\cdot,\cdot)$ is defined by
\[
\mathcal{A}_h(v, w):=(\Delta_{w,k+3} v,\ \Delta_{w,k+3} w)_{\T_h}, \quad \forall v, w\in V_h.
\]
\end{algorithm}
\begin{remark}
Using the same WG finite element space $V_h^0$ defined by (\ref{Vh0}), a $C^0$ weak Galerkin finite element method has been presented in \cite{wg-bi2}, which is stated as follows:
\begin{algorithm}[C0WG Method]
The $C^0$ weak Galerkin finite element scheme for solving problem (\ref{pde})-(\ref{pde-bc2}) is defined as follows: find $u_h=\{u_0,u_{n}\bn_e\}\in V_h^0$ such that
\begin{equation}\label{c0wg}
\mathcal{A}_{wg}(u_h, v_h)=(f,\;v_0), \quad\forall\
v_h=\{v_0, v_{n}\bn_e\}\in V_h^0,
\end{equation}
where the bilinear form $a_h(\cdot,\cdot)$ is defined by
\[
\mathcal{A}_{wg}(v, w):=(\Delta_{w,k} v,\ \Delta_{w,k} w)_{\T_h}+s_h(v, w), \quad \forall v, w\in V_h,
\]
with the stabilizer term
\[
s_h(v, w)=\sum_{K\in\T_h}h_K^{-1}\langle \frac{\partial v_0}{\partial\bm{n}_e}-v_n, \frac{\partial w_0}{\partial\bm{n}_e}-w_n\rangle_{\partial K}, \quad \forall v, w\in V_h.
\]
\end{algorithm}

From the formulation of the SF-C0WG method (\ref{wg}) and the C0WG method (\ref{c0wg}), we can see that: the SF-C0WG method is obtained by removing the stabilizer $s_h(\cdot,\cdot )$ in the C0WG method via raising the degree of polynomials used in the definition of the weak Laplacian from $k$ to $k+3$.
A comparison of numerical performance of both WG methods is discussed in Section \ref{Sec:numeric}.
\end{remark}

\begin{remark}
Using the $C^0$ conforming finite element space $S_h$ defined by (\ref{Sh}), a $C^0$ interior penalty  method has been presented in \cite{engel, bs}, which is stated as follows:
\begin{algorithm}[C0IP Method]
The $C^0$ interior penalty method for solving problem (\ref{pde})-(\ref{pde-bc2}) is defined as follows: find $u_h\in S_h$ such that
\begin{equation}\label{c0ipdg}
\mathcal{A}_{dg}(u_h, v_h)=(f,\;v_h), \quad\forall\
v_h\in S_h,
\end{equation}
where the bilinear form $\mathcal{A}_{dg}(\cdot,\cdot)$ is defined as follows: for any $v, w\in S_h$,
\[
\mathcal{A}_{dg}(v, w):=(D^2 v,\ D^2 w)_{\T_h}
-\langle \ljump\nabla v\rjump, \la\frac{\partial^2 w}{\partial \bm{n}_e^2}\ra \rangle_{\E_h}
-\langle \ljump\nabla w\rjump, \la\frac{\partial^2 v}{\partial \bm{n}_e^2}\ra \rangle_{\E_h}
+j_h(v, w),
\]
with the stabilizer term
\[
j_h(v, w)=\sum_{e\in\E_h}\eta h_e^{-1}\langle \ljump\nabla v\rjump, \ljump\nabla w\rjump\rangle_{e}, \quad \forall v, w\in S_h.
\]
Here the penalty parameter $\eta$ is a positive constant.
\end{algorithm}
For any $v\in H^2(\T_h)$, the jump $\ljump \nabla v\rjump$ and the average $\la \frac{\partial^2 v}{\partial \bm{n}_e^2}\ra$  are defined as follows.

Let $e\in \E_h^0$ be the common edge of $K_1$ and $K_2$ of $\T_h$ and $\bm{n}_i, i=1,2$ denote by the outward unit normal vector of the boundary $\partial K_i, i=1,2$. We define on the edge $e$
\begin{align*}
\la \frac{\partial^2 v}{\partial \bm{n}_e^2}\ra=\frac{1}{2}(\frac{\partial^2 v_1}{\partial \bm{n}_e^2}+\frac{\partial^2 v_2}{\partial \bm{n}_e^2})\quad
\mbox{and}\quad
\ljump \nabla v\rjump = \nabla v_1\cdot \bm{n}_1+\nabla v_2\cdot\bm{n}_2,
\end{align*}
where $v_i=v|_{K_i}, i= 1,2$. On a boundary edge $e\subset\partial\Omega$, we simply take $\la \frac{\partial^2 v}{\partial \bm{n}_e^2}\ra=\frac{\partial^2 v}{\partial \bm{n}_e^2}$ and $\ljump \nabla v\rjump = \nabla v\cdot \bm{n}$.

Compared with the C0IP method (\ref{c0ipdg}), our SF-C0WG method (\ref{wg}) has a simple formulation without any integration term on the edges of $\E_h$, which will simplify the implementation of the corresponding numerical scheme and reduce the assembling time of stiffness matrix. Although  the SF-C0WG method (\ref{wg}) has more degrees of freedom than the C0IP method (\ref{c0ipdg}), numerical experiments in Section \ref{Sec:numeric} indicate that its total computational time is less than that of the C0IP method (\ref{c0ipdg}).
\end{remark}

\section{Well Posedness}\label{Sec:well-pose}

For simplicity of notation, from now on we shall drop the subscript $k+3$ in the notation $\Delta_{w,k+3}$ for the discrete weak Laplacian.

In order to analyze the SF-C0WG method (\ref{wg}), we introduce two $H^2$-like norms $\3bar\cdot\3bar$ and $\|\cdot\|_{2,h}$ over $V_h^0$ by
\begin{equation}\label{3barnorm}
\3bar v\3bar=\left[ \sum_{K\in\T_h}\|\Delta_wv\|_{L^2(K)}^2\right]^{1/2},
\end{equation}
and
\begin{equation}\label{norm}
\|v\|_{2,h} = \left[ \sum_{K\in\T_h}\left(\|\Delta v_0\|_{L^2(K)}^2+h_K^{-1}\|\frac{\partial v_0}{\partial \bn_e}-v_n\|^2_{L^2(\pK)}\right) \right]^{1/2},
\end{equation}
for all $v\in V_h^0$. Obviously, $\|\cdot\|_{2,h}$ is indeed a norm on $V_h^0$. We will show $\3bar\cdot\3bar$ is also a norm by proving that the norms $\|\cdot\|_{2,h}$ and $\3bar\cdot\3bar$ are equivalent on the finite element space $V_h^0$ in Lemma \ref{lem:happy}.

In what follows, the trace inequality  is a frequently used analysis tool, which states as \cite{wy-mixed}: for any function $\phi\in H^1(K)$, there holds
\begin{equation}\label{trace}
\|\phi\|_{L^2(\partial K)}^2 \leq C \left( h_K^{-1} \|\phi\|_{L^2(K)}^2 + h_K
\|\nabla \phi\|_{L^2(K)}^2\right).
\end{equation}

The follow lemma plays a key role in the proof of Lemma \ref{lem:happy}.
\begin{lemma} \label{l-m2}
For any $v=\{v_0, v_n\bm{n}_e\}\in V_h$ and $K\in\T_h$, there exists a polynomial $\varphi\in \mathbb{P}_{k+3}(K)$ such that
\begin{align*}
(\Delta v_0, \varphi)_K=0,\quad
\langle (\nabla v_0-v_n\bm{n}_e)\cdot\bm{n}, \varphi\rangle_{\partial K}=\|(\nabla v_0-v_n\bm{n}_e)\cdot\bm{n}\|_{L^2(\partial K)}^2,
\end{align*}
and
\begin{align}\label{q-est}
\|\varphi\|_{L^2(K)}\leq Ch_K^{1/2}\|(\nabla v_0-v_n\bm{n}_e)\cdot\bm{n}\|_{L^2(\partial K)}.
\end{align}
\end{lemma}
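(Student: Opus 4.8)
The plan is to construct the desired $\varphi$ explicitly on a reference element and then pull back, using a scaling/degrees-of-freedom argument. First I would fix $K\in\T_h$ and set $g:=(\nabla v_0-v_n\bm n_e)\cdot\bm n\in L^2(\pK)$; note that on each edge $e\subset\pK$ one has $\nabla v_0\cdot\bm n|_e\in\mathbb P_{k+1}(e)$ (since $v_0|_K\in\mathbb P_{k+2}(K)$) and $v_n|_e\in\mathbb P_{k+1}(e)$, so $g|_e\in\mathbb P_{k+1}(e)$, i.e.\ $g$ is a piecewise polynomial of degree $k+1$ on the three edges of the triangle. The two requirements on $\varphi$ are: (i) $(\Delta v_0,\varphi)_K=0$, a single linear constraint; and (ii) $\l g,\varphi\r_{\pK}=\|g\|_{L^2(\pK)}^2$, which is guaranteed if the edge trace of $\varphi$ equals $g$ on $\pK$ (a sufficient, convenient choice). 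So it suffices to find $\varphi\in\mathbb P_{k+3}(K)$ with prescribed trace $g$ on $\pK$ that is additionally $L^2(K)$-orthogonal to the fixed polynomial $\Delta v_0\in\mathbb P_k(K)$.

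The key dimension count is this: $\dim\mathbb P_{k+3}(K)=\binom{k+5}{2}$, while the trace of a $\mathbb P_{k+3}$ polynomial on $\pK$ lives in a space of dimension $3(k+4)-3=3(k+3)$ (three edges, each carrying a degree-$(k+3)$ polynomial, matched at the three vertices). The kernel of the trace map (the "bubble" functions vanishing on $\pK$) has dimension $\binom{k+5}{2}-3(k+3)=\binom{k+1}{2}$; for $k\ge 0$ this is $\ge 0$, but I actually only need \emph{one} extra free parameter beyond matching the trace in order to satisfy the single orthogonality constraint (i). For $k\ge 1$ the bubble space is nonempty, so I pick any trace lift $\varphi_0\in\mathbb P_{k+3}(K)$ of $g$, pick one nonzero bubble $b\in\mathbb P_{k+3}(K)$ with $b|_{\pK}=0$ and $(\Delta v_0,b)_K\ne 0$ — such a $b$ exists because otherwise all bubbles would be orthogonal to $\mathbb P_k(K)\ni\Delta v_0$, which fails for $k\ge 1$ by a direct check on the reference triangle (the product of the three barycentric coordinates times $\mathbb P_k$ spans a $\binom{k+1}{2}$-dimensional space not orthogonal to $\mathbb P_k$) — and set $\varphi=\varphi_0-\tfrac{(\Delta v_0,\varphi_0)_K}{(\Delta v_0,b)_K}\,b$. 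For $k=0$ the bubble space is $0$-dimensional, but then $\Delta v_0\in\mathbb P_0(K)$ is constant and $(\Delta v_0,\varphi_0)_K = c\int_K\varphi_0$; here one adjusts instead using a trace lift whose mean is controllable, e.g.\ replace $\varphi_0$ by $\varphi_0 + \lambda\,\varphi_1$ where $\varphi_1\in\mathbb P_3(K)$ has trace $g$ as well but different integral (the affine space of trace-lifts of $g$ is $1$-dimensional in the $k=0$ bubble-free case, so this needs the slightly more careful remark that $\dim\mathbb P_3(K)=10 > 9 = 3\cdot 3$, giving exactly one bubble after all — recomputing, $\binom{5}{2}=10$ and the trace dimension is $3\cdot 4 - 3 = 9$, so the bubble space has dimension $1$, and the generic argument applies uniformly for all $k\ge 0$).

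Finally I would establish the bound \eqref{q-est} by the standard scaling argument: on the reference triangle $\hat K$, the map $g\mapsto\varphi$ just constructed is linear and bounded from $\mathbb P_{k+1}(\partial\hat K)$ (broken, with the continuity at vertices that $g$ inherits) into $\mathbb P_{k+3}(\hat K)$ — boundedness holds because both spaces are finite-dimensional and the construction (trace lift via a fixed projection, plus subtracting a fixed multiple of a fixed bubble, with the denominator $(\Delta v_0,b)_K$ bounded below only after we also track its dependence on $v$; to avoid that, choose $b$ independent of $v$ and instead define $\varphi$ by solving the $\binom{k+1}{2}\times\binom{k+1}{2}$ Gram-type system, whose inverse is a fixed matrix on $\hat K$). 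Thus $\|\hat\varphi\|_{L^2(\hat K)}\le \hat C\|\hat g\|_{L^2(\partial\hat K)}$, and pulling back to $K$ with $h_K\simeq\mathrm{diam}(K)$ multiplies the interior $L^2$ norm by $h_K$ and the boundary $L^2$ norm by $h_K^{1/2}$, yielding $\|\varphi\|_{L^2(K)}\le Ch_K\cdot h_K^{-1/2}\|g\|_{L^2(\pK)} = Ch_K^{1/2}\|g\|_{L^2(\pK)}$, which is \eqref{q-est}.

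The main obstacle is the solvability of constraint (i) together with the prescribed trace, i.e.\ showing the relevant bubble space is rich enough to absorb the single orthogonality condition against $\Delta v_0$; once that algebraic fact is pinned down (uniformly in $k\ge 0$, as the dimension count above indicates), the norm estimate is a routine homogeneity/scaling argument and the orthogonality $(\Delta v_0,\varphi)_K=0$ and the identity $\l g,\varphi\r_{\pK}=\|g\|^2_{L^2(\pK)}$ hold by construction.
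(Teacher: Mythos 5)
Your construction hinges on finding a ``trace lift'' $\varphi_0\in\mathbb{P}_{k+3}(K)$ with $\varphi_0|_{\partial K}=g$, where $g=(\nabla v_0-v_n\bm{n}_e)\cdot\bm{n}$. This is where the argument breaks down: $g$ is in general \emph{discontinuous at the vertices} of $K$, whereas the trace of any polynomial on $\partial K$ is continuous. Indeed, at a vertex shared by two edges the outward normal $\bm{n}$ jumps, so $\nabla v_0\cdot\bm{n}$ takes two different limiting values there, and $v_n$ is an independent polynomial on each edge with no vertex continuity imposed. Hence $g$ does not lie in the image of the trace map (your own dimension count $3(k+4)-3$ of that image already encodes the vertex matching that $g$ fails to satisfy), no lift $\varphi_0$ exists, and the phrase ``the continuity at vertices that $g$ inherits'' in your scaling step is false.

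The repair is to weaken the exact-trace requirement to edge-wise moment matching, which is all that is actually needed: since $g|_{e}\in\mathbb{P}_{k+1}(e)$ on each edge $e$, the identity $\langle g,\varphi\rangle_{\partial K}=\|g\|^2_{L^2(\partial K)}$ already follows from $\langle \varphi,\tau\rangle_{e}=\langle g,\tau\rangle_{e}$ for all $\tau\in\mathbb{P}_{k+1}(e)$ on each edge. This is what the paper does: for each edge $e_i$ it seeks $\varphi_i=\bigl(\prod_{j\neq i}\lambda_j\bigr) q$ with $q\in\mathbb{P}_{k+1}(K)$ (so $\varphi_i$ automatically vanishes on the other two edges), subject to the $k+2$ moment conditions on $e_i$ and to $(\varphi_i,\tau)_K=0$ for all $\tau\in\mathbb{P}_{k}(K)$; this is a square $\tfrac12(k+2)(k+3)$ linear system shown to be uniquely solvable, and one sets $\varphi=\sum_i\varphi_i$. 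Note that imposing orthogonality to all of $\mathbb{P}_k(K)$ (rather than only to $\Delta v_0$, as you do) makes the linear system independent of $v$, which also disposes of the difficulty you flag about the correction coefficient $(\Delta v_0,\varphi_0)_K/(\Delta v_0,b)_K$ depending on $v$. Your concluding scaling argument for (\ref{q-est}) is sound in spirit and close to the paper's, but it can only be run once the construction itself is repaired along these lines.
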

\begin{proof}
For any $K\in\T_h$, let $e_i, i=1,2,3$ be the three edges of $K$ and $\lambda_i$'s are the barycentric coordinates of $K$. Then,  we  define a polynomial $\varphi_i\in \mathbb{P}_{k+3}(K)$ for $i=1,2,3$, respectively, by requiring that
\begin{align}\label{q0}
\varphi_i = \prod_{j=1, j\neq i}^3\lambda_j q,
\end{align}
with $q\in \mathbb{P}_{k+1}(K)$ and such that
\begin{subequations}
\begin{align}
\label{qa}
\langle \varphi_i, \tau\rangle_{e_i}&=\langle (\nabla v_0-v_n\bm{n}_e)\cdot\bm{n}, \tau\rangle_{e_i},  \quad\forall \tau\in \mathbb{P}_{k+1}(e_i),\\
\label{qb}
(\varphi_i, \tau)_K &=0,\quad\forall \tau\in \mathbb{P}_k(K).
\end{align}
\end{subequations}
Since there are $$(k+2)+\frac{1}{2}(k+1)(k+2)=\frac{1}{2}(k+2)(k+3)$$ equations and the same number of unknowns in the linear system (\ref{qa})-(\ref{qb}), the existence and uniqueness of $\varphi_i$ are equivalent.

Assume that both $\varphi_i$ and $\widehat{\varphi}_i$ satisfy the linear system (\ref{qa})-(\ref{qb}), we will prove their difference $d_i=\varphi_i-\widehat{\varphi}_i$ vanishes on $K$. From (\ref{q0})-(\ref{qb}), we know that $d_i$ can be expressed as
\begin{align}\label{d0}
d_i = \prod_{j=1, j\neq i}^3\lambda_j \widetilde{q},
\end{align}
with $\widetilde{q}\in \mathbb{P}_{k+1}(K)$ and
satisfies the following conditions,
\begin{subequations}
\begin{align}
\label{da}
\langle d_i, \tau\rangle_{e_i}&=0,  \quad\forall \tau\in \mathbb{P}_{k+1}(e_i),\\
\label{db}
(d_i, \tau)_K &=0,\quad\forall \tau\in \mathbb{P}_k(K).
\end{align}
\end{subequations}
It follows from (\ref{da}) that $d_i=0$ on $e_i$, which together with (\ref{d0}) implies $\widetilde{q}$ in (\ref{d0}) can be written as $\widetilde{q}=\lambda_i \omega$ with $\omega\in \mathbb{P}_{k}(K)$. Therefore, we have
\begin{align*}
d_i = \prod_{j=1}^3\lambda_j \omega, \,\, \mbox{ with } \omega\in \mathbb{P}_{k}(K),
\end{align*}
which combining with (\ref{db}) implies $d_i=0$ on $K$.

Hence, the linear system (\ref{qa})-(\ref{qb}) has a unique solution  $\varphi_i$ in the form of (\ref{q0}), which belongs to $\mathbb{P}_{k+3}(K)$.

Then, by a scaling arguments, we have
\begin{align*}
\|\varphi_i\|_{L^2(K)} \leq Ch_K^{1/2}\|\varphi_i\|_{L^2(\partial K)}.
\end{align*}
Thanks to (\ref{q0}), it is known that $\varphi_i = 0$ on $e_j$ for $j\neq i$. Then,
$\|\varphi_i\|_{L^2(\partial K)}=\|\varphi_i\|_{L^2(e_i)}$. Therefore,
\begin{align}\label{s1}
\|\varphi_i\|_{L^2(K)} \leq Ch_K^{1/2}\|\varphi_i\|_{L^2(e_i)}.
\end{align}
Let $\theta_i(x) = \prod_{j=1,j\neq i}^{3}\lambda_j(x)$. Using (\ref{q0}) and (\ref{qa}), we have
\begin{align*}
\langle  \theta_i q, \tau\rangle_{e_i}
=\langle \varphi_i, \tau\rangle_{e_i}
\leq \|(\nabla v_0-v_n\bm{n}_e)\cdot\bm{n}\|_{L^2(e_i)}\|\tau\|_{L^2(e_i)}.
\end{align*}
Taking $\tau = q$ in the above inequality, and by the second mean value theorem of integrals, there exist a point $\varepsilon_1\in e_i$ such that
\begin{align*}
\theta_i(\varepsilon_1)\|q\|_{L^2(e_i)}^2=\langle  \theta_i q, q\rangle_{e_i}
\leq \|(\nabla v_0-v_n\bm{n}_e)\cdot\bm{n}\|_{L^2(e_i)}\|q\|_{L^2(e_i)}.
\end{align*}
Then, after cancelling $\|q\|_{L^2(e_i)}$, we obtain
\begin{align}\label{s2}
\|q\|_{L^2(e_i)}
\leq \theta_i^{-1}(\varepsilon_1)\|(\nabla v_0-v_n\bm{n}_e)\cdot\bm{n}\|_{L^2(e_i)}.
\end{align}
Therefore, using (\ref{q0}) and the second mean value theorem of integrals again, there exist a point $\varepsilon_2\in e_i$ such that
\begin{align*}
\|\varphi_i\|_{L^2(e_i)}=\sqrt{\langle \theta_i^2, q^2\rangle_{e_i}}
=\theta_i(\varepsilon_2)\|q\|_{L^2(e_i)},
\end{align*}
which together with (\ref{s2}) leads to
\begin{align*}
\|\varphi_i\|_{L^2(e_i)} \leq \theta_i(\varepsilon_2)\theta_i^{-1}(\varepsilon_1)
\|(\nabla v_0-v_n\bm{n}_e)\cdot\bm{n}\|_{L^2(e_i)}.
\end{align*}
Thus, from (\ref{s1}) and the above inequality, we obtain
\begin{align*}
\|\varphi_i\|_{L^2(K)} \leq Ch_K^{1/2}\|(\nabla v_0-v_n\bm{n}_e)\cdot\bm{n}\|_{L^2(e_i)}.
\end{align*}
Finally, choosing $\varphi = \sum_{i=1}^3\varphi_i$ ends the proof.
\end{proof}

\begin{lemma}\label{lem:happy} There exist two positive constants $C_1$ and $C_2$ such
that for any $v=\{v_0,v_n\bn_e\}\in V_h$, we have
\begin{equation*}
C_1 \|v\|_{2,h}\le \3bar v\3bar \leq C_2 \|v\|_{2,h}.
\end{equation*}
\end{lemma}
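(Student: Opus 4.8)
The plan is to establish the two inequalities separately, with the norm-equivalence resting on the local identity in the definition~\eqref{wl} of the weak Laplacian together with Lemma~\ref{l-m2}.

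For the upper bound $\3bar v\3bar\le C_2\|v\|_{2,h}$, I would work element by element. Fix $K\in\T_h$ and test~\eqref{wl} with $\varphi=\Delta_w v|_K\in\mathbb{P}_{k+3}(K)$. On the other hand, integration by parts of $(\Delta v_0,\varphi)_K$ gives $(\Delta v_0,\varphi)_K=-(\nabla v_0,\nabla\varphi)_K+\langle\nabla v_0\cdot\bn,\varphi\rangle_{\pK}$, so subtracting yields
\e{
\|\Delta_w v\|_{L^2(K)}^2=(\Delta v_0,\Delta_w v)_K+\langle(v_n\bn_e-\nabla v_0)\cdot\bn,\Delta_w v\rangle_{\pK}.
}
Now bound the first term by Cauchy--Schwarz, the second by Cauchy--Schwarz on $\pK$ followed by the trace/inverse estimate $\|\Delta_w v\|_{L^2(\pK)}\le Ch_K^{-1/2}\|\Delta_w v\|_{L^2(K)}$ (which follows from~\eqref{trace} and a standard inverse inequality, since $\Delta_w v$ is a polynomial on $K$). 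This produces
\a{
\|\Delta_w v\|_{L^2(K)}^2\le\Big(\|\Delta v_0\|_{L^2(K)}+Ch_K^{-1/2}\|(\nabla v_0-v_n\bn_e)\cdot\bn\|_{L^2(\pK)}\Big)\|\Delta_w v\|_{L^2(K)},
}
after which I cancel one factor of $\|\Delta_w v\|_{L^2(K)}$, square, and sum over $K$ to get $\3bar v\3bar^2\le C_2^2\|v\|_{2,h}^2$.

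For the lower bound $C_1\|v\|_{2,h}\le\3bar v\3bar$, I must recover both pieces of $\|v\|_{2,h}$ from $\3bar v\3bar$. To control $\|\Delta v_0\|_{L^2(K)}$: note $\Delta v_0|_K\in\mathbb{P}_k(K)\subset\mathbb{P}_{k+3}(K)$, so testing~\eqref{wl} with $\varphi=\Delta v_0$ and using integration by parts as above gives $\|\Delta v_0\|_{L^2(K)}^2=(\Delta_w v,\Delta v_0)_K-\langle(\nabla v_0-v_n\bn_e)\cdot\bn,\Delta v_0\rangle_{\pK}$; Cauchy--Schwarz plus the same trace/inverse estimate for $\Delta v_0$ then bounds it by $\big(\|\Delta_w v\|_{L^2(K)}+Ch_K^{-1/2}\|(\nabla v_0-v_n\bn_e)\cdot\bn\|_{L^2(\pK)}\big)\|\Delta v_0\|_{L^2(K)}$. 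The crucial point—and the main obstacle—is controlling the jump term $h_K^{-1/2}\|(\nabla v_0-v_n\bn_e)\cdot\bn\|_{L^2(\pK)}$ by $\3bar v\3bar$; this is exactly where Lemma~\ref{l-m2} enters. Apply Lemma~\ref{l-m2} to get $\varphi\in\mathbb{P}_{k+3}(K)$ with $(\Delta v_0,\varphi)_K=0$, $\langle(\nabla v_0-v_n\bn_e)\cdot\bn,\varphi\rangle_{\pK}=\|(\nabla v_0-v_n\bn_e)\cdot\bn\|_{L^2(\pK)}^2$, and $\|\varphi\|_{L^2(K)}\le Ch_K^{1/2}\|(\nabla v_0-v_n\bn_e)\cdot\bn\|_{L^2(\pK)}$. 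Plugging this $\varphi$ into~\eqref{wl} and using integration by parts gives
\e{
\|(\nabla v_0-v_n\bn_e)\cdot\bn\|_{L^2(\pK)}^2=\langle(v_n\bn_e-\nabla v_0)\cdot\bn,\varphi\rangle_{\pK}=-\big((\Delta_w v,\varphi)_K-(\Delta v_0,\varphi)_K\big)=-(\Delta_w v,\varphi)_K,
}
so by Cauchy--Schwarz and the bound on $\|\varphi\|_{L^2(K)}$,
\a{
\|(\nabla v_0-v_n\bn_e)\cdot\bn\|_{L^2(\pK)}^2\le\|\Delta_w v\|_{L^2(K)}\,Ch_K^{1/2}\|(\nabla v_0-v_n\bn_e)\cdot\bn\|_{L^2(\pK)},
}
hence $h_K^{-1/2}\|(\nabla v_0-v_n\bn_e)\cdot\bn\|_{L^2(\pK)}\le C\|\Delta_w v\|_{L^2(K)}$. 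Finally, observe that since $\bn=\pm\bn_e$ on each edge, $\|(\nabla v_0-v_n\bn_e)\cdot\bn\|_{L^2(\pK)}=\|\frac{\partial v_0}{\partial\bn_e}-v_n\|_{L^2(\pK)}$, so this controls the jump part of $\|v\|_{2,h}$. Combining with the preceding bound on $\|\Delta v_0\|_{L^2(K)}$ (absorb the jump term, which we have just controlled), cancelling and summing over $K$ yields $\|v\|_{2,h}\le C\3bar v\3bar$, i.e. the claimed $C_1\|v\|_{2,h}\le\3bar v\3bar$. I expect the routine trace/inverse estimates and the edge-orientation bookkeeping to be the only technical fuss; the real content is the application of Lemma~\ref{l-m2}.
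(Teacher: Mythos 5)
Your proposal is correct and follows essentially the same route as the paper: the identity $(\Delta_w v,\varphi)_K=(\Delta v_0,\varphi)_K+\langle (v_n\bn_e-\nabla v_0)\cdot\bn,\varphi\rangle_{\pK}$ tested successively with $\varphi=\Delta_w v$, the function from Lemma~\ref{l-m2}, and $\varphi=\Delta v_0$, in that order. The only blemish is a pair of cancelling sign slips in your display for the jump-term bound (Lemma~\ref{l-m2} normalizes $\langle(\nabla v_0-v_n\bn_e)\cdot\bn,\varphi\rangle_{\pK}$, not its negative), which is immaterial once Cauchy--Schwarz is applied.
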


\begin{proof}
For any $v=\{v_0,v_n\bn_e\}\in V_h$ and $\varphi\in \mathbb{P}_{k+3}(K)$, it follows from the definition of
weak Laplacian (\ref{wl}) and integration by parts that
\begin{eqnarray}
(\Delta_{w} v, \varphi)_K
&=&-(\nabla v_0, \nabla\varphi)_K+\l v_n\bn_e\cdot\bn,  \varphi\r_\pK\nonumber\\
&=&(\Delta v_0, \varphi)_K+\l (v_n\bn_e-\nabla v_0)\cdot\bn, \varphi\r_\pK.\label{n-1}
\end{eqnarray}

By letting $\varphi=\Delta_w v$ in (\ref{n-1}) we arrive at
\begin{eqnarray*}
\|\Delta_{w} v\|^2_{L^2(K)} &=&(\Delta v_0,  \Delta_w v)_K+\l (v_n\bn_e-\nabla v_0)\cdot\bn,  \Delta_w v\r_\pK.
\end{eqnarray*}

From the trace inequality (\ref{trace}) and the inverse inequality,
we have
\begin{eqnarray*}
\|\Delta_wv\|^2_{L^2(K)} &\le& \|\Delta v_0\|_{L^2(K)} \|\Delta_w v\|_{L^2(K)}
   +\|(v_n\bn_e-\nabla v_0)\cdot\bn\|_{L^2(\pK)} \|\Delta_w v\|_{L^2(\pK)}\\
&\le& C(\|\Delta v_0\|_{L^2(K)}+h_K^{-1/2}\|(v_n\bn_e-\nabla v_0)\cdot\bn\|_{L^2(\pK)}) \|\Delta_w v\|_{L^2(K)},
\end{eqnarray*}
which implies
$$
\|\Delta_w v\|_{L^2(K)} \le C \left(\|\Delta v_0\|_{L^2(K)}+h_K^{-1/2}\|(v_n\bn_e-\nabla v_0)\cdot\bn\|_{L^2(\pK)}\right),
$$
and consequently
$$\3bar v\3bar \leq C_2 \|v\|_{2,h}.$$
Next we will prove
\begin{equation}\label{n-33}
\sum_{K\in\T_h} h_K^{-1}\|(\nabla v_0-v_n\bn_e)\cdot\bn\|^2_{L^2(\pK)} \leq C\3bar v\3bar^2.
\end{equation}


Let $\varphi_0$ be obtained from Lemma \ref{l-m2}, taking $\varphi=\varphi_0$ in (\ref{n-1}) yields
\begin{align}
\|(v_n\bn_e-\nabla v_0)\cdot\bn\|_{L^2(\partial K)}^2
&=(\Delta_{w} v, \varphi_0)_K\le \|\Delta_{w} v\|_{L^2(K)}\|\varphi_0\|_{L^2(K)}\nonumber\\
&\leq Ch_K^{1/2} \|\Delta_{w} v\|_{L^2(K)}\|(v_n\bn_e-\nabla v_0)\cdot\bn\|_{L^2(\partial K)},
\end{align}
which implies (\ref{n-33}).

Finally, by letting $\varphi=\Delta v_0$ in (\ref{n-1}) we arrive at
\begin{eqnarray*}
\|\Delta v_0\|^2_{L^2(K)} &=&(\Delta v_0, \ \Delta_w v)_K
    -  \l (v_n\bn_e-\nabla v_0)\cdot\bn, \ \Delta_w v\r_\pK
\end{eqnarray*}
Using the trace inequality (\ref{trace}), the inverse inequality, and (\ref{n-33}), one has
\begin{eqnarray*}
\|\Delta v_0\|^2_{L^2(K)} &\le& C \|\Delta_w v\|_{L^2(K)}\|\Delta v_0\|_{L^2(K)},
\end{eqnarray*}
which gives
\begin{eqnarray}
\sum_{K\in\T_h}\|\Delta v_0\|^2_{L^2(K)} &\le& C \3bar v\3bar^2,
\end{eqnarray}
which together with (\ref{n-33}) yields
\[ \3bar v\3bar \geq C_1\|v\|_{2,h}.\]
The proof is completed.
\end{proof}

\smallskip
In the following lemma we will prove the well-posedness of the SF-C0WG method (\ref{wg}).
\begin{lemma}
The SF-C0WG finite element scheme (\ref{wg}) has a unique
solution.
\end{lemma}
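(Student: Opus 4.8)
The plan is to use the standard fact that (\ref{wg}) is a square linear system posed on the finite-dimensional space $V_h^0$ (trial and test spaces coincide), so that existence of a solution is equivalent to its uniqueness, and the latter is equivalent to showing that the homogeneous problem has only the trivial solution. Thus I would first reduce to the case $f=0$ and aim to prove $u_h=0$.

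For the core of the argument I would take $v_h=u_h$ in (\ref{wg}). With $f=0$ this gives
$\mathcal{A}_h(u_h,u_h)=(\Delta_w u_h,\Delta_w u_h)_{\T_h}=\3bar u_h\3bar^2=0$,
hence $\3bar u_h\3bar=0$. Invoking the norm equivalence of Lemma \ref{lem:happy}, namely $C_1\|u_h\|_{2,h}\le \3bar u_h\3bar$, forces $\|u_h\|_{2,h}=0$. Since $\|\cdot\|_{2,h}$ is a genuine norm on $V_h^0$, we obtain $u_h=\{u_0,u_n\bn_e\}=0$, i.e. $u_0\equiv 0$ and $u_n\equiv 0$. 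Therefore the homogeneous system admits only the trivial solution, and by the square-matrix argument the scheme (\ref{wg}) is uniquely solvable.

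I do not expect a substantial obstacle here: the real work has already been done in Lemma \ref{lem:happy}, which provides the lower bound $\3bar\cdot\3bar\ge C_1\|\cdot\|_{2,h}$ showing $\3bar\cdot\3bar$ is definite on $V_h^0$. The only mildly delicate point, if one wishes to be fully self-contained, is the remark that $\|\cdot\|_{2,h}$ separates points of $V_h^0$: from $\|v\|_{2,h}=0$ one gets $\Delta v_0|_K=0$ on every $K$ and $v_n=\partial v_0/\partial\bn_e$ on $\pK$; since $v_n$ is single-valued on each interior edge while matching $\partial v_0/\partial\bn_e$ from both adjacent elements, $v_0$ is in fact $C^1$ across $\E_h^0$, hence globally harmonic; together with $v_0\in H_0^1(\Omega)$ this yields $v_0\equiv 0$ and then $v_n\equiv 0$. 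This is exactly the assertion already made after (\ref{norm}), so it can simply be cited.
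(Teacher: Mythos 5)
Your proposal is correct and follows essentially the same route as the paper: test with $v=u_h$, invoke the norm equivalence of Lemma \ref{lem:happy} to get $\|u_h\|_{2,h}=0$, and then deduce $u_0=0$ and $u_n=0$ from the vanishing of the element Laplacians and the matching of $u_n$ with $\nabla u_0\cdot\bn_e$. Your added remark spelling out why $\|\cdot\|_{2,h}$ is definite (single-valuedness of $u_n$ on interior edges forcing global harmonicity of $u_0\in H_0^1(\Omega)$) is in fact slightly more careful than the paper's terse appeal to uniqueness for the homogeneous biharmonic problem, but it is the same argument in substance.
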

\begin{proof}
To show the well-posedness of (\ref{wg}) assume that $f=g_D=g_N=0$. We will show that $u_h$ vanishes. Take $v=u_h$  in (\ref{wg}). It follows that
\[
(\Delta_w u_h,\Delta_w u_h)_{\T_h}=0.
\]
Then Lemma \ref{lem:happy} implies $\|u_h\|_{2,h}=0$. Consequently, we have $\Delta u_0=0$,
$\nabla u_0\cdot\bn_e=u_{n}$ on $\pK$.
Thus $u_0$ is the solution of (\ref{pde})-(\ref{pde-bc2}) with $f=g_D=g_N=0$. We have $u_0=0$, then $u_n=0$, which ends the proof.
\end{proof}

\section{An Error Equation}\label{Sec:err-eqn}

Let $Q_0:H^2(\Omega)\rightarrow S_h$ be the Scott-Zhang interpolation operator introduced in \cite{wg-bi2},
which has the following properties:
\begin{itemize}
\item [a.] \citep[Page 493]{wg-bi2} $Q_0$ preserves polynomial of degree up to $k+2$, i.e., $Q_0v=v\in \mathbb{P}_{k+2}(\T_h)$.\\
\item [b.] \citep[Lemma 8.2]{wg-bi2} $Q_0$ preserves the face mass of order $k$, i.e.,
\begin{align}\label{p-fm}
\langle v-Q_0v, p\rangle_{e}=0,\quad \forall p\in \mathbb{P}_k(e), \, e\in \E_h.
\end{align}
\item [c.] \citep[Theorem 8.1]{wg-bi2}For any $v\in H^{\gamma}(\Omega)$ with $\gamma\geq 2$, there holds
\begin{align}\label{Q0-approxi}
\left[\sum_{K\in\T_h}h^{2s}|v-Q_0v|_{H^s(K)}^2\right]^{1/2}\leq Ch^{\min\{k+3,\gamma\}}|v|_{H^{\gamma}(\Omega)},\quad 0\leq s\leq 2.
\end{align}
\end{itemize}

Now for the true solution $u$ of (\ref{pde})-(\ref{pde-bc2}), we introduce an interpolation operator $Q_h: H^2(\Omega)\rightarrow V_h$ such that on each element $K\in\mathcal{T}_h$,
\[
Q_h u = \{Q_0 u, Q_n(\frac{\partial u}{\partial \bn_e})\bn_e\},
\]
where $Q_n$ denotes the element-wise defined $L^2$ projections from $L^2(e)$ onto $\mathbb{P}_{k+1}(e)$ for each $e\subset \pK$.

Define the error between the WG solution $u_h=\{u_0, u_n\bm{n}_e\}$ and the projection $Q_hu=\{Q_0u, Q_n(\frac{\partial u}{\partial\bm{n}_e})\bm{n}_e\}$ of the exact solution $u$ as
\[
 e_h=Q_hu-u_h:=\{e_0, e_n\bm{n}_e\},
\]
with
\[
e_0=Q_0u-u_0, \quad e_n=Q_n(\frac{\partial u}{\partial\bm{n}_e})-u_n.
\]
The aim of this section is to obtain an error equation that $e_h$ {\color{red}satisfied}.

\begin{lemma}
Let $\pi_h$ be an element-wise defined $L^2$ projections onto $\mathbb{P}_{k+3}(K)$ on each element $K\in\T_h$.
For any $K\in\T_h$ and $w\in H^2(\Omega)$, we have
\begin{align}\label{key}
(\Delta_{w}(Q_hw), v)_K = (\Delta Q_0w,\ v)_K
 +\langle  Q_n(\frac{\partial w}{\partial \bm{n}})-\frac{\partial }{\partial\bn}(Q_0w), v \rangle_{\pK},
\end{align}
for any $v\in \mathbb{P}_{k+3}(K)$.
\end{lemma}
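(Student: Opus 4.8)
The plan is to start from the definition of the weak Laplacian applied to the specific argument $Q_h w = \{Q_0 w, Q_n(\partial w/\partial \bn_e)\bn_e\}$. By equation~(\ref{wl}) with $m = k+3$, for any $v \in \mathbb{P}_{k+3}(K)$ we have
\[
(\Delta_w (Q_h w), v)_K = -(\nabla Q_0 w, \nabla v)_K + \langle Q_n(\tfrac{\partial w}{\partial \bn_e})\,\bn_e\cdot\bn,\ v\rangle_{\pK}.
\]
The first move is to integrate by parts in the volume term: $-(\nabla Q_0 w, \nabla v)_K = (\Delta Q_0 w, v)_K - \langle \nabla Q_0 w\cdot\bn,\ v\rangle_{\pK}$, which is legitimate since $Q_0 w \in \mathbb{P}_{k+2}(K) \subset H^2(K)$ on each element. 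Substituting gives exactly
\[
(\Delta_w(Q_h w), v)_K = (\Delta Q_0 w, v)_K + \langle Q_n(\tfrac{\partial w}{\partial \bn_e})\,\bn_e\cdot\bn - \nabla Q_0 w\cdot\bn,\ v\rangle_{\pK},
\]
which is the claimed identity~(\ref{key}) once one notes that $\bn_e\cdot\bn = \pm 1$ on each edge and $Q_n(\partial w/\partial \bn_e)\,\bn_e\cdot\bn = Q_n(\partial w/\partial \bn)$ with the sign absorbed consistently (here $\partial w/\partial\bn_e := \nabla w\cdot\bn_e$ and $\frac{\partial}{\partial\bn}(Q_0 w) = \nabla Q_0 w\cdot\bn$, matching the notation introduced right before the definition of the weak Laplacian).

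There is essentially no obstacle here: the identity is purely a restatement of~(\ref{wl}) after one integration by parts, and it does not even use the approximation properties (a)--(c) of $Q_0$ or the $L^2$-projection property defining $Q_n$. The only point that needs a line of care is the bookkeeping between $\bn_e$ (the fixed normal from $\mathcal{D}_h$ attached to the edge $e$) and $\bn$ (the outward normal of $K$ on $\pK$): on an edge $e\subset\pK$ one has $\bn|_e = \sigma\,\bn_e$ with $\sigma\in\{+1,-1\}$, so $\langle Q_n(\partial w/\partial\bn_e)\,\bn_e\cdot\bn,\ v\rangle_e = \sigma\,\langle Q_n(\partial w/\partial\bn_e),\ v\rangle_e = \langle Q_n(\partial w/\partial\bn),\ v\rangle_e$ because $\partial w/\partial\bn = \sigma\,\partial w/\partial\bn_e$ and $Q_n$ commutes with multiplication by the constant $\sigma$; similarly $\nabla Q_0 w\cdot\bn = \sigma\,\nabla Q_0 w\cdot\bn_e$. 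I would state this sign reconciliation in one sentence and then write the final line.

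Thus the proof is three short steps: (i) expand $(\Delta_w(Q_h w), v)_K$ via~(\ref{wl}); (ii) integrate by parts on $K$ in the gradient term; (iii) collect the boundary contributions and reconcile the $\bn_e$ versus $\bn$ convention. No inverse inequality, trace inequality, or projection estimate is needed for this lemma — those will enter only in the subsequent derivation of the error equation and the error estimates where~(\ref{key}) is combined with the approximation bounds~(\ref{Q0-approxi}) and the face-mass property~(\ref{p-fm}).
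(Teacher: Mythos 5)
Your proposal is correct and follows essentially the same route as the paper: expand $(\Delta_w(Q_h w), v)_K$ via the definition (\ref{wl}), integrate by parts on $K$, and identify $Q_n(\frac{\partial w}{\partial \bn_e})\bn_e\cdot\bn$ with $Q_n(\frac{\partial w}{\partial \bn})$. Your extra sentence on the sign bookkeeping between $\bn_e$ and $\bn$ is a welcome clarification (and your boundary term $\langle \nabla Q_0 w\cdot\bn, v\rangle_{\pK}$ correctly has $v$ where the paper's displayed formula contains a typographical $w$), but the argument is the same.
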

\begin{proof}
From the definition (\ref{wl}) of weak Laplacian it follows that
\begin{align}\label{q1}
(\Delta_{w}(Q_hw), v)_K &= -(\nabla Q_0w,\ \nabla v)_K +\langle  Q_n(\frac{\partial w}{\partial \bm{n}_e})\bm{n}_e\cdot\bm{n}, v \rangle_{\pK},
\end{align}
for any $v\in \mathbb{P}_{k+3}(K)$.

Using integration by parts, we get
\begin{align}\label{q2}
-(\nabla Q_0w,\ \nabla v)_K = (\Delta Q_0w, \ v)_K -\langle \nabla Q_0w\cdot\bm{n}, w\rangle_{\partial K}.
\end{align}
Plugging (\ref{q2}) into (\ref{q1}), and recalling that
\[Q_n(\frac{\partial w}{\partial \bm{n}_e})\bm{n}_e\cdot\bm{n}=Q_n(\frac{\partial w}{\partial \bm{n}})\]
yields (\ref{key}).
The proof is completed.
\end{proof}

\begin{lemma}[Error Equation]
Let $u$ and $u_h$ be the solutions of the problem (\ref{pde})-(\ref{pde-bc2}) and the SF-C0WG scheme (\ref{wg}), respectively.
For any $v\in V_h^0$, we have
\begin{eqnarray}
\mathcal{A}_h(e_h, v)=\ell(u,v),\label{ee}
\end{eqnarray}
where $\ell(u,v):=\sum\nolimits_{i=1}^2\ell_i(u,v)$, with
\begin{subequations}
  \begin{align}
  \label{el1}
  \ell_1(u,v)&:=(\Delta_w (Q_hu)-\pi_h\Delta u, \Delta_w v)_{\T_h},\\
  \label{el2}
  \ell_2(u,v)&:= \langle \Delta u-\pi_h\Delta  u, (\nabla v_0-v_n\bn_e)\cdot\bn\rangle_{\partial\T_h}.
  \end{align}
\end{subequations}
\end{lemma}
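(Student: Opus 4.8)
The plan is to start from the continuous weak formulation and the discrete scheme, and to manipulate the difference $\mathcal{A}_h(Q_hu,v)-(f,v_0)$ into the claimed form. First I would test the exact solution $u$ against $v=\{v_0,v_n\bn_e\}\in V_h^0$: since $v_0\in S_h\subset H^1_0(\Omega)$ and $\Delta^2u=f$, two integrations by parts give $(f,v_0)=(\Delta^2 u,v_0)=(\Delta u,\Delta v_0)_{\T_h}$ after using that $v_0$ vanishes on $\Gamma$ together with $v_n|_{e\subset\Gamma}=0$; more precisely I would write $(f,v_0)=(\Delta u,\Delta v_0)_{\T_h}-\langle \Delta u,\nabla v_0\cdot\bn\rangle_{\partial\T_h}+\langle\partial(\Delta u)/\partial\bn,v_0\rangle_{\partial\T_h}$ and then argue that the term involving $v_0$ on interior edges cancels (because $v_0$ is single-valued and $\partial(\Delta u)/\partial\bn$ has opposite signs from the two sides) and vanishes on boundary edges (since $v_0=0$ there). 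The normal-derivative term cannot be dropped, but because $Q_n$ is the $L^2$ projection onto $\mathbb{P}_{k+1}(e)\supset\mathbb{P}_{k}(e)$ and $v_n\in\mathbb{P}_{k+1}(e)$, and because $\nabla v_0\cdot\bn$ on each edge is a polynomial of degree $k+1$, I can insert the projection freely; combined with single-valuedness of $v_n\bn_e\cdot\bn$ on interior edges and its vanishing on $\Gamma$, I get $\langle\Delta u,\nabla v_0\cdot\bn\rangle_{\partial\T_h}=\langle\Delta u,(\nabla v_0-v_n\bn_e)\cdot\bn\rangle_{\partial\T_h}$.

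Next I would compute $\mathcal{A}_h(Q_hu,v)=(\Delta_w(Q_hu),\Delta_w v)_{\T_h}$ using the previous lemma (equation \eqref{key}) with $w=u$: since $\Delta_w v\in\mathbb{P}_{k+3}(K)$, we have on each $K$
\[
(\Delta_w(Q_hu),\Delta_w v)_K=(\Delta Q_0u,\Delta_w v)_K+\langle Q_n(\partial u/\partial\bn)-\partial(Q_0u)/\partial\bn,\Delta_w v\rangle_{\pK}.
\]
Now I would apply \eqref{n-1} from the proof of Lemma \ref{lem:happy}, namely $(\Delta_w v,\varphi)_K=(\Delta v_0,\varphi)_K+\langle(v_n\bn_e-\nabla v_0)\cdot\bn,\varphi\rangle_{\pK}$ with $\varphi=\Delta Q_0u$ — except $\Delta Q_0u$ need not lie in $\mathbb{P}_{k+3}(K)$, so instead I would replace it by $\pi_h\Delta Q_0u$ throughout, or more cleanly work with $\pi_h\Delta u$: write $(\Delta Q_0u,\Delta_w v)_K$ as $(\pi_h\Delta u,\Delta_w v)_K+(\Delta Q_0u-\pi_h\Delta u,\Delta_w v)_K$, and the first of these equals $(\Delta v_0,\pi_h\Delta u)_K+\langle(v_n\bn_e-\nabla v_0)\cdot\bn,\pi_h\Delta u\rangle_{\pK}$ by \eqref{n-1}. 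Replacing $\pi_h\Delta u$ by $\Delta u$ in the volume term $(\Delta v_0,\pi_h\Delta u)_K$ is free because $\Delta v_0\in\mathbb{P}_k(K)\subset\mathbb{P}_{k+3}(K)$ and $\pi_h$ is an $L^2$ projection, so $(\Delta v_0,\pi_h\Delta u)_K=(\Delta v_0,\Delta u)_K$.

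Assembling these pieces over all $K\in\T_h$ and subtracting the identity for $(f,v_0)$ derived above, the genuine term $(\Delta u,\Delta v_0)_{\T_h}$ cancels, and I am left with
\[
\mathcal{A}_h(Q_hu,v)-(f,v_0)=(\Delta Q_0u-\pi_h\Delta u,\Delta_w v)_{\T_h}+\langle\pi_h\Delta u-\Delta u,(v_n\bn_e-\nabla v_0)\cdot\bn\rangle_{\partial\T_h}+\mathcal{R},
\]
where $\mathcal{R}$ collects the edge term $\langle Q_n(\partial u/\partial\bn)-\partial(Q_0u)/\partial\bn,\Delta_w v\rangle_{\partial\T_h}$ coming from \eqref{key}. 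The key remaining point — and the step I expect to be the main obstacle — is to show $\mathcal{R}=0$ and to rewrite $(\Delta Q_0u-\pi_h\Delta u,\Delta_w v)_{\T_h}$ as $(\Delta_w(Q_hu)-\pi_h\Delta u,\Delta_w v)_{\T_h}$. For the latter, I would go back to \eqref{key} once more: integrating $\Delta_w(Q_hu)$ against $\Delta_w v\in\mathbb{P}_{k+3}(K)$ already gave the split above, so in fact $(\Delta Q_0u,\Delta_w v)_K=(\Delta_w(Q_hu),\Delta_w v)_K-\langle Q_n(\partial u/\partial\bn)-\partial(Q_0u)/\partial\bn,\Delta_w v\rangle_{\pK}$, and this edge term is exactly $-\mathcal{R}$, so $\mathcal{R}$ is absorbed and one obtains precisely $\ell_1(u,v)+\ell_2(u,v)$ with $\ell_1,\ell_2$ as in \eqref{el1}–\eqref{el2}. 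Finally, since $\mathcal{A}_h(Q_hu,v)-\mathcal{A}_h(u_h,v)=\mathcal{A}_h(e_h,v)$ and $\mathcal{A}_h(u_h,v)=(f,v_0)$ by the scheme \eqref{wg}, this yields \eqref{ee}. I should double-check the orientation and sign bookkeeping on interior edges for the jump cancellations (the normal $\bn$ vs.\ the fixed $\bn_e$), and verify that $Q_n$'s face-mass preservation \eqref{p-fm} of $Q_0$ is consistent with the degree count $k+1$ versus $k$ — this is where a sign slip or an off-by-one in the polynomial degrees would most likely hide.
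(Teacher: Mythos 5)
Your proposal is correct and follows essentially the same route as the paper: both reduce $(f,v_0)$ to $(\Delta u,\Delta v_0)_{\T_h}-\langle\Delta u,(\nabla v_0-v_n\bn_e)\cdot\bn\rangle_{\partial\T_h}$, insert $\pi_h$ using $\Delta v_0\in\mathbb{P}_k(K)\subset\mathbb{P}_{k+3}(K)$, and apply the weak-Laplacian/integration-by-parts identity with test function $\pi_h\Delta u$ to produce $\ell_1$ and $\ell_2$. Your detour through (\ref{key}) is self-cancelling (as you note, $\mathcal{R}$ is reabsorbed, and in fact $\Delta Q_0u\in\mathbb{P}_k(K)$ so the worry prompting it was unfounded), and the justification of $\sum_K\langle\Delta u,v_n\bn_e\cdot\bn\rangle_{\pK}=0$ needs only the single-valuedness of $v_n$, the sign flip of $\bn_e\cdot\bn$, and $v_n|_\Gamma=0$ --- the remark about $Q_n$ is irrelevant but harmless.
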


\begin{proof}
For $v=\{v_0,v_n\bn_e\}\in V_h^0$, testing (\ref{pde}) by  $v_0$  and using the fact that
\[\sum_{K\in\T_h}\langle \Delta u, v_n\bn_e\cdot\bn\rangle_\pK=0\]
and integration by parts,  we arrive at
\begin{eqnarray}
(f,v_0)&=&(\Delta^2u, v_0)_{\T_h}\nonumber\\
&=&(\Delta u,\Delta v_0)_{\T_h} -\langle \Delta u,
\nabla v_0\cdot\bn\rangle_{\partial\T_h} + \langle\nabla(\Delta u)\cdot\bn,
v_0\rangle_{\partial\T_h}\label{m1}\\
&=&(\Delta u,\Delta v_0)_{\T_h} -\langle \Delta u,
(\nabla v_0-v_n\bn_e)\cdot\bn\rangle_{\partial\T_h}.\nonumber
\end{eqnarray}
Next we investigate the term  $(\Delta u,\Delta v_0)_{\T_h}$ in the above equation. Using (\ref{key}), integration by parts and the definition of weak Laplacian, we have
\begin{eqnarray*}
 (\Delta u, \Delta v_0)_{\T_h}&=&(\pi_h\Delta u, \Delta v_0)_{\T_h} \\
&=& -(\nabla v_0, \nabla (\pi_h\Delta u))_{\T_h}
    +\langle \nabla v_0\cdot\bn, \pi_h\Delta u \rangle_{\partial\T_h}\nonumber\\
&=&(\Delta_w v,\ \pi_h\Delta u)_{\T_h}
  +\langle (\nabla v_0-v_n\bn_e)\cdot\bn, \pi_h\Delta
u\rangle_{\partial\T_h}\nonumber\\
&=&(\Delta_w (Q_hu),\ \Delta_w v)_{\T_h}-\ell_1(u,v)
   +\langle (\nabla v_0-v_n\bn_e)\cdot\bn, \pi_h\Delta
u\rangle_{\partial\T_h},
\end{eqnarray*}
which together with (\ref{m1}) yields
\begin{eqnarray}
(f,v_0)&=&\mathcal{A}_h(Q_hu, v)-\ell_1(u,v)
-\langle (\nabla v_0-v_n\bn_e)\cdot\bn, \Delta u-\pi_h\Delta u \rangle_{\partial\T_h}.\label{mmmm}
\end{eqnarray}
which implies that
\begin{eqnarray*}
\mathcal{A}_h(Q_hu, v)=(f,v_0)+\sum_{i=1}^2\ell(u,v).
\end{eqnarray*}
Subtracting (\ref{wg}) from the above equation ends the proof.
\end{proof}

\section{An Error Estimate in the $H^2$-like Norm}\label{Sec:H2-err}

We will obtain the optimal convergence rate for the solution $u_h$ of the SF-C0WG method (\ref{wg}) in a discrete $H^2$ norm.

\begin{lemma}\label{l2}
Assume $w\in H^{\gamma+2}(\Omega)$ with $\gamma>0$. There exists a constant $C$ such that the following estimates hold true:
\begin{align}
\label{mmm1}
&\left(\sum_{K\in\T_h} h_K\|\Delta w-\pi_h\Delta w\|_{L^2(\partial
K)}^2\right)^{1/2}
\leq C h^{\min\{k+4,\gamma\}}|w|_{H^{\gamma+2}(\Omega)},\\
\label{z2}
&\left(\sum_{K\in\T_h} h^{-1}_K\|\frac{\partial }{\partial\bn}(Q_0w)-Q_n(\frac{\partial w}{\partial\bn})\|_{L^2(\pK)}^2\right)^{1/2}
\leq Ch^{\min\{k+1,\gamma\}}|w|_{H^{\gamma+2}(\Omega)},\\
\label{z3}
&\|\Delta_w(Q_hw)-\pi_h\Delta w\|_{L^2(\T_h)}\leq Ch^{\min\{k+1,\gamma\}}|w|_{H^{\gamma+2}(\Omega)}.
\end{align}
\end{lemma}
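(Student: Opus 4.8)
The plan is to prove the three estimates one by one, in each case reducing to the approximation bounds (\ref{Q0-approxi}) for $Q_0$ and to standard $L^2$-projection error estimates for $\pi_h$, and using the trace inequality (\ref{trace}) together with the inverse inequality to pass between volume and boundary norms. Two elementary facts will be used repeatedly: on each edge $e\subset\pK$ the normal derivative $\frac{\partial}{\partial\bn}(Q_0w)$ lies in $\mathbb{P}_{k+1}(e)$ (because $Q_0w|_K\in\mathbb{P}_{k+2}(K)$), hence is unchanged by $Q_n$; and $\Delta Q_0w|_K\in\mathbb{P}_{k}(K)\subset\mathbb{P}_{k+3}(K)$, hence is unchanged by $\pi_h$.

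For (\ref{mmm1}) I would start from the fact that $w\in H^{\gamma+2}(\Omega)$ gives $\Delta w\in H^{\gamma}(\Omega)$, so the standard error estimate for the $L^2$-projection onto $\mathbb{P}_{k+3}(K)$ yields, on each $K\in\T_h$,
\[
\|\Delta w-\pi_h\Delta w\|_{L^2(K)}+h_K\|\nabla(\Delta w-\pi_h\Delta w)\|_{L^2(K)}\le Ch_K^{\min\{k+4,\gamma\}}|w|_{H^{\min\{k+4,\gamma\}+2}(K)}.
\]
Applying (\ref{trace}) to $\phi=\Delta w-\pi_h\Delta w$ and multiplying by $h_K$ gives $h_K\|\Delta w-\pi_h\Delta w\|_{L^2(\pK)}^2\le C(\|\Delta w-\pi_h\Delta w\|_{L^2(K)}^2+h_K^2\|\nabla(\Delta w-\pi_h\Delta w)\|_{L^2(K)}^2)$; inserting the previous display, summing over $K$, and using $\min\{k+4,\gamma\}+2\le\gamma+2$ finishes (\ref{mmm1}).

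For (\ref{z2}) the first observation above gives the identity $\frac{\partial}{\partial\bn}(Q_0w)-Q_n(\frac{\partial w}{\partial\bn})=Q_n(\frac{\partial}{\partial\bn}(Q_0w-w))$, so $L^2$-stability of $Q_n$ bounds its $L^2(\pK)$-norm by $\|\nabla(Q_0w-w)\|_{L^2(\pK)}$. Applying (\ref{trace}) componentwise to $\nabla(Q_0w-w)$ and multiplying by $h_K^{-1}$ then yields
\[
h_K^{-1}\Big\|\frac{\partial}{\partial\bn}(Q_0w)-Q_n\Big(\frac{\partial w}{\partial\bn}\Big)\Big\|_{L^2(\pK)}^2\le C\big(h_K^{-2}|Q_0w-w|_{H^1(K)}^2+|Q_0w-w|_{H^2(K)}^2\big).
\]
Summing over $K$ and applying (\ref{Q0-approxi}) with $s=1$ and $s=2$, together with the quasi-uniformity of $\T_h$ to absorb the $h_K^{-2}$ factor, makes both terms $O\big(h^{2\min\{k+1,\gamma\}}|w|_{H^{\gamma+2}(\Omega)}^2\big)$, which is (\ref{z2}).

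For (\ref{z3}) I would set $\theta:=\Delta_w(Q_hw)-\pi_h\Delta w\in\mathbb{P}_{k+3}(\T_h)$, subtract $(\pi_h\Delta w,v)_K$ from both sides of (\ref{key}), use $\pi_h\Delta Q_0w=\Delta Q_0w$, and take $v=\theta$ to obtain, on each $K$,
\[
\|\theta\|_{L^2(K)}^2=\big(\pi_h\Delta(Q_0w-w),\theta\big)_K+\Big\langle Q_n\Big(\frac{\partial w}{\partial\bn}\Big)-\frac{\partial}{\partial\bn}(Q_0w),\ \theta\Big\rangle_{\pK}.
\]
The first term is bounded by $C|Q_0w-w|_{H^2(K)}\|\theta\|_{L^2(K)}$ via $L^2$-stability of $\pi_h$ and $\|\Delta\psi\|_{L^2(K)}\le C|\psi|_{H^2(K)}$, and the second by $Ch_K^{-1/2}\|Q_n(\frac{\partial w}{\partial\bn})-\frac{\partial}{\partial\bn}(Q_0w)\|_{L^2(\pK)}\|\theta\|_{L^2(K)}$ via (\ref{trace}) and the inverse inequality in the form $\|\theta\|_{L^2(\pK)}\le Ch_K^{-1/2}\|\theta\|_{L^2(K)}$. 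Cancelling one power of $\|\theta\|_{L^2(K)}$, squaring, summing over $K$, and applying (\ref{Q0-approxi}) with $s=2$ to the first part and the already-proved (\ref{z2}) to the second gives (\ref{z3}). I do not expect a genuine obstacle here; the only points needing care are the two degree-matching observations and the bookkeeping of Sobolev exponents that produces the extra order $\min\{k+4,\gamma\}$ in (\ref{mmm1}) versus only $\min\{k+1,\gamma\}$ in (\ref{z2})--(\ref{z3}).
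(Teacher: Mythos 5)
Your proposal is correct and follows essentially the same route as the paper's proof: (\ref{mmm1}) via the $L^2$-projection error plus the trace inequality, (\ref{z2}) by reducing to $\|\frac{\partial}{\partial\bn}(Q_0w-w)\|_{L^2(\pK)}$ (you use the identity $Q_n(\frac{\partial}{\partial\bn}(Q_0w-w))$ and $L^2$-stability of $Q_n$, the paper uses the triangle inequality and best approximation — same bound), and (\ref{z3}) by testing the identity (\ref{key}) with $\theta=\Delta_w(Q_hw)-\pi_h\Delta w$ and invoking (\ref{Q0-approxi}) and (\ref{z2}). The two degree-matching observations you isolate ($\frac{\partial}{\partial\bn}(Q_0w)|_e\in\mathbb{P}_{k+1}(e)$ and $\Delta Q_0w|_K\in\mathbb{P}_k(K)$) are exactly the facts the paper relies on implicitly.
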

\begin{proof}
By the trace inequality (\ref{trace}) and the approximation property of the $L^2$ orthogonal projection $\pi_h$, we have
\begin{align*}
&h_K\|\Delta w - \pi_h \Delta w\|_{L^2(\partial K)}^2\\
&\leq C(\|\Delta w - \pi_h \Delta w\|_{L^2(K)}^2+
h_K^2\|\nabla(\Delta w - \pi_h \Delta w)\|_{L^2(K)}^2)\\
&\leq Ch_K^{2\min\{k+4,\gamma\}}|\Delta w|_{H^{\gamma}(K)}^2\\
&\leq Ch_K^{2\min\{k+4,\gamma\}}|w|_{H^{\gamma+2}(K)}^2.
\end{align*}
Taking the summation of the above inequalities over all $K\in\T_h$,  we completes the proof of (\ref{mmm1}).

Next, we turn to the estimate (\ref{z2}).
It follows from the definition of $Q_0$ and $Q_n$ that
\begin{align}\label{z1}
&\|\frac{\partial}{\partial\bn}(Q_0w)-Q_n(\frac{\partial w}{\partial\bm{n}})\|_{L^2(\pK)}\nonumber\\
&\leq \|\frac{\partial}{\partial\bm{n}} (Q_0w-w)\|_{L^2(\pK)}
+\|\frac{\partial w}{\partial\bm{n}}-Q_n(\frac{\partial w}{\partial\bm{n}})\|_{L^2(\pK)}\nonumber\\
&\leq 2\|\frac{\partial}{\partial\bm{n}} (Q_0w-w)\|_{L^2(\pK)}.
\end{align}
Furthermore, using the trace inequality (\ref{trace}) and the approximation property (\ref{Q0-approxi}) of $Q_0$, we obtain
\begin{align*}
&\|\frac{\partial}{\partial\bm{n}} (Q_0w-w)\|_{L^2(\pK)}^2\\
&\leq  C(h_K^{-1}\|\nabla (Q_0w-w)\|_{L^2(K)}^2+h_K\|\nabla^2 (Q_0w-w)\|_{L^2(K)}^2)\\
&\leq Ch_K^{\min\{2k+3, 2\gamma+1\}}|w|_{H^{\gamma+2}(K)}^2,
\end{align*}
which together with (\ref{z1}) yields
\begin{align*}
\sum_{K\in\T_h} h^{-1}_K\|\frac{\partial }{\partial\bn}(Q_0w)-Q_n(\frac{\partial w}{\partial\bn})\|_{L^2(\pK)}^2
\leq Ch^{\min\{2k+2,2\gamma\}}|w|_{H^{\gamma+2}(\Omega)}^2,
\end{align*}
which ends the proof of (\ref{z2}).

Now we consider the estimate (\ref{z3}). For any $v\in \mathbb{P}_{k+3}(\T_h)$,
from (\ref{key}) and the orthogonal property of the $L^2$ projection $\pi_h$, it follows  that
\begin{align}\label{c1}
&(\Delta_w(Q_hw)-\pi_h w, v)_{\T_h}\nonumber\\
&=(\Delta (Q_0w-w), v)_{\T_h}
+\langle Q_n(\frac{\partial u}{\partial\bm{n}})-\frac{\partial}{\partial\bn} (Q_0u), v \rangle_{\partial \T_h}\nonumber\\
&=I_1+I_2.
\end{align}
From the Cauchy-Schwarz inequality and the approximation property (\ref{Q0-approxi}) of $Q_0$, one has
\begin{align}\label{c2}
|I_1|
&\leq \sum_{K\in\T_h} \|\Delta (Q_0w-w)\|_{L^2(K)}\| v\|_{L^2(K)}\nonumber\\
&\leq (\sum_{K\in\T_h} |Q_0w-w|_{H^2(K)}^2)^{1/2}\| v\|_{L^2(\T_h)}\nonumber\\
&\leq Ch^{\min\{k+1,\gamma\}}|w|_{H^{\gamma+2}(\Omega)}\| v\|_{L^2(\T_h)}.
\end{align}
Using the Cauchy-Schwarz inequality, (\ref{z2}) and the inverse inequality, we arrive at
\begin{align*}
|I_2|
&\leq (\sum_{K\in\T_h}h_K^{-1}\|(Q_n(\frac{\partial u}{\partial\bm{n}})-\frac{\partial}{\partial \bn} (Q_0u)\|_{L^2(\partial K)}^2)^{1/2}
 (\sum_{K\in\T_h}h_K\| v\|_{L^2(\partial K)}^2)^{1/2}\\
&\leq Ch^{\min\{k+1,\gamma\}}|w|_{H^{\gamma+2}(\Omega)}\| v\|_{L^2(\T_h)},
\end{align*}
which together with (\ref{c1}) and (\ref{c2}) yields
\begin{align*}
|(\Delta_w(Q_hw)-\pi_h w, v)_{\T_h}|\leq Ch^{\min\{k+1,\gamma\}}|w|_{H^{\gamma+2}(\Omega)}\| v\|_{L^2(\T_h)}.
\end{align*}
Taking $v=\Delta_w(Q_hw)-\pi_h w$ in the above inequality ends the proof of (\ref{z3}).
\end{proof}

\begin{lemma}\label{l3}
Assume $w\in H^{\gamma+2}(\Omega)$ with $\gamma>0$. There exists a constant $C$ such that the following estimates hold true:
\begin{align}
\label{mm1}
|\ell_1(w, v)|&\leq Ch^{\min\{k+1,\gamma\}}|w|_{H^{\gamma+2}(\Omega)}\3bar v\3bar,\\
\label{mm2}
|\ell_2(w, v)|&\leq Ch^{\min\{k+4,\gamma\}}|w|_{H^{\gamma+2}(\Omega)}\3bar v\3bar,
\end{align}
for any $v\in V_h^0$.
\end{lemma}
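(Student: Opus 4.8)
The plan is to bound $\ell_1$ and $\ell_2$ separately, using the representations \eqref{el1} and \eqref{el2} together with the estimates already gathered in Lemma \ref{l2} and the norm equivalence of Lemma \ref{lem:happy}.

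\textbf{Estimate of $\ell_1$.} Starting from \eqref{el1}, apply the Cauchy--Schwarz inequality over $\T_h$ to get
\[
|\ell_1(w,v)| \le \|\Delta_w(Q_hw)-\pi_h\Delta w\|_{L^2(\T_h)}\,\|\Delta_w v\|_{L^2(\T_h)} = \|\Delta_w(Q_hw)-\pi_h\Delta w\|_{L^2(\T_h)}\,\3bar v\3bar .
\]
The first factor is exactly what \eqref{z3} controls, giving the bound $Ch^{\min\{k+1,\gamma\}}|w|_{H^{\gamma+2}(\Omega)}\3bar v\3bar$, which is \eqref{mm1}. This part is essentially immediate once Lemma \ref{l2} is in hand.

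\textbf{Estimate of $\ell_2$.} From \eqref{el2}, a Cauchy--Schwarz inequality edge by edge, inserting the scaling weights $h_K^{1/2}$ and $h_K^{-1/2}$, yields
\[
|\ell_2(w,v)| \le \Bigl(\sum_{K\in\T_h} h_K\|\Delta w-\pi_h\Delta w\|_{L^2(\pK)}^2\Bigr)^{1/2}
\Bigl(\sum_{K\in\T_h} h_K^{-1}\|(\nabla v_0-v_n\bn_e)\cdot\bn\|_{L^2(\pK)}^2\Bigr)^{1/2}.
\]
The first factor is bounded by \eqref{mmm1} by $Ch^{\min\{k+4,\gamma\}}|w|_{H^{\gamma+2}(\Omega)}$. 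For the second factor I would invoke \eqref{n-33} from the proof of Lemma \ref{lem:happy}, which states precisely that $\sum_{K} h_K^{-1}\|(\nabla v_0-v_n\bn_e)\cdot\bn\|_{L^2(\pK)}^2 \le C\3bar v\3bar^2$; equivalently one may use $\|v\|_{2,h}\le C_1^{-1}\3bar v\3bar$ from Lemma \ref{lem:happy}, since the jump term appears in $\|\cdot\|_{2,h}$. Combining the two factors gives \eqref{mm2}.

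\textbf{Main obstacle.} Neither bound is technically deep; the only point requiring care is to make sure the scaling exponents match. For $\ell_2$ one must split the edge weight symmetrically as $h_K^{1/2}\cdot h_K^{-1/2}$ so that the first sum is of the form controlled by \eqref{mmm1} and the second is of the form controlled by \eqref{n-33}; getting this pairing right (rather than, say, $h_K$ against $1$) is what makes the $h^{\min\{k+4,\gamma\}}$ order come out, reflecting the extra approximation power of $\pi_h$ on $\mathbb{P}_{k+3}$. I would also note that, because $\gamma>0$ is assumed, all the minima are positive, so the right-hand sides genuinely tend to zero; no separate discussion of low-regularity cases is needed here.
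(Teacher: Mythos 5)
Your proposal is correct and follows essentially the same route as the paper: Cauchy--Schwarz plus \eqref{z3} for $\ell_1$, and a $h_K^{1/2}\cdot h_K^{-1/2}$--weighted Cauchy--Schwarz on the boundary terms combined with \eqref{mmm1} and the bound $\sum_{K}h_K^{-1}\|(\nabla v_0-v_n\bn_e)\cdot\bn\|_{L^2(\pK)}^2\le C\3bar v\3bar^2$ from Lemma \ref{lem:happy} for $\ell_2$. No gaps.
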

\begin{proof}
Using the Cauchy-Schwarz inequality and (\ref{z3}) of Lemma \ref{l2}, we have
\begin{align*}
|\ell_1(w, v)|&=\left| (\Delta_w(Q_hw)-\pi_h w, \Delta_wv)_{\T_h}\right|\\
&\leq \|\Delta_w(Q_hw)-\pi_h w\|_{L^2(\T_h)} \|\Delta_wv\|_{L^2(\T_h)}\\
&\leq Ch^{\min\{k+1,\gamma\}}|w|_{H^{\gamma+2}(\Omega)}\3bar v\3bar.
\end{align*}

It follows from the Cauchy-Schwarz inequality, (\ref{mmm1}), and Lemma \ref{lem:happy} that
\begin{align*}
|\ell_2(w,v)|&=\left|\sum_{K\in\T_h} \langle \Delta w-\pi_h\Delta w, (\nabla
v_0-v_n\bn_e)\cdot\bn\rangle_\pK\right|\nonumber\\
&\leq \left(\sum_{K\in\T_h} h_K\|\Delta w-\pi_h\Delta
w\|_{L^2(\pK)}^2\right)^{1/2} \nonumber\\
&\quad\times\left(\sum_{K\in\T_h} h_K^{-1}
\|(\nabla v_0-v_n\bn_e)\cdot\bn\|_{L^2(\pK)}^2\right)^{1/2}\nonumber\\
&\leq C h^{\min\{k+4,\gamma\}}|w|_{H^{\gamma+2}(\Omega)} \|v\|_{2,h}\nonumber\\
&\leq C h^{\min\{k+4,\gamma\}}|w|_{H^{\gamma+2}(\Omega)} \3bar v\3bar.
\end{align*}
We have completed the proof.
\end{proof}



\begin{theorem}\label{thm1}
Let $u_h\in V_h$  be the solution arising from the SF-C0WG scheme
(\ref{wg}). Assume that the exact solution $u\in H^{k+3}(\Omega)$. Then, there
exists a constant $C$ such that
\begin{equation}\label{err1}
\3bar Q_hu-u_h\3bar \leq Ch^{k+1}|u|_{H^{k+3}(\Omega)}.
\end{equation}
\end{theorem}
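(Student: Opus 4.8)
The plan is to use the error equation from the previous lemma together with the norm equivalence and the consistency estimates already established. First I would take the test function $v = e_h = Q_h u - u_h \in V_h^0$ in the error equation \eqref{ee}, giving
\[
\mathcal{A}_h(e_h, e_h) = \ell(u, e_h) = \ell_1(u, e_h) + \ell_2(u, e_h).
\]
By definition of $\mathcal{A}_h$ and of the triple-bar norm \eqref{3barnorm}, the left-hand side is exactly $\3bar e_h \3bar^2$. So the whole task reduces to bounding the right-hand side.

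Next I would apply Lemma \ref{l3} with $w = u$ and $v = e_h$. Since the hypothesis is $u \in H^{k+3}(\Omega)$, I set $\gamma = k+1$ so that $\gamma + 2 = k+3$ and $\gamma > 0$ (using $k \geq 0$). Then \eqref{mm1} gives $|\ell_1(u, e_h)| \leq C h^{\min\{k+1,\,k+1\}} |u|_{H^{k+3}(\Omega)} \3bar e_h \3bar = C h^{k+1} |u|_{H^{k+3}(\Omega)} \3bar e_h \3bar$, and \eqref{mm2} gives $|\ell_2(u, e_h)| \leq C h^{\min\{k+4,\,k+1\}} |u|_{H^{k+3}(\Omega)} \3bar e_h \3bar = C h^{k+1} |u|_{H^{k+3}(\Omega)} \3bar e_h \3bar$. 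Adding these two bounds yields $\3bar e_h \3bar^2 \leq C h^{k+1} |u|_{H^{k+3}(\Omega)} \3bar e_h \3bar$. Dividing through by $\3bar e_h \3bar$ (trivial if it is zero) gives \eqref{err1} with the constant absorbed. One should note that $\3bar \cdot \3bar$ is a genuine norm on $V_h^0$ by Lemma \ref{lem:happy}, so this division is legitimate.

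There is really no serious obstacle here: all the heavy lifting — the norm equivalence (Lemma \ref{lem:happy}), the construction of the special polynomial $\varphi$ (Lemma \ref{l-m2}), the error equation, and the two consistency estimates (Lemma \ref{l3})—has already been done. The only subtlety worth flagging is the bookkeeping of the regularity index: one must check that choosing $\gamma = k+1$ is the right way to match the assumed regularity $H^{k+3}(\Omega)$ to the abstract estimates stated in terms of $H^{\gamma+2}(\Omega)$, and that both $\min\{k+1,\gamma\}$ and $\min\{k+4,\gamma\}$ then collapse to $k+1$, which is what limits the overall rate. Everything else is a one-line Cauchy–Schwarz-style combination.
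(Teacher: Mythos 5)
Your proposal is correct and follows exactly the paper's own argument: take $v=e_h$ in the error equation \eqref{ee}, note $\mathcal{A}_h(e_h,e_h)=\3bar e_h\3bar^2$, apply Lemma \ref{l3} with $\gamma=k+1$ so both $\min\{k+1,\gamma\}$ and $\min\{k+4,\gamma\}$ equal $k+1$, and cancel one factor of $\3bar e_h\3bar$. The paper's proof is a compressed version of precisely this reasoning, so there is nothing further to compare.
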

\begin{proof}
Taking $v=e_h$ in the error equation (\ref{ee}) and using Lemma \ref{l3} with $\gamma=k+1$, we arrive at
\begin{align*}
\3bar e_h\3bar^2&=\ell(u, e_h)\leq Ch^{k+1}|u|_{H^{k+3}(\Omega)}\3bar e_h\3bar,
\end{align*}
which completes the proof.
\end{proof}

\section{Error Estimates in the $L^2$ Norm and $H^1$ Norm}\label{Sec:L2-err}

In this section, we will provide estimates for the
$L^2$ norm and $H^1$ norm of the error between the exact solution $u$ and its corresponding WG finite element solution $u_h$.

Firstly, let us introduce the following dual problem
\begin{eqnarray}
\Delta^2\phi&=& \chi\quad
\mbox{in}\;\Omega,\label{dual}\\
\phi&=&0\quad\mbox{on}\;\Gamma,\label{dual1}\\
\nabla \phi\cdot\bn&=&0\quad\mbox{on}\;\Gamma.\label{dual2}
\end{eqnarray}
Assume that the dual problem has the $H^{\alpha+2}$-regularity in the sense that there exists a constant $C$ such that
\begin{equation}\label{reg}
\|\phi\|_{H^{\alpha+2}(\Omega)}\le C\|\chi\|_{H^{\alpha-2}(\Omega)},\quad \mbox{ for } \alpha=1,2.
\end{equation}
For $\chi\in H^{\alpha-2}(\Omega)$ with $\alpha>0$, the $H^{\alpha+2}$-regularity has been proved for smooth domains in any dimension\cite{dauge}. The $H^4$-regularity has been proved by Blum and Rannacher in \cite{br}  for the two dimensional convex polygonal domains with inner angles less than $126.28\dots^{\rm o}$.



\begin{lemma}\label{lem:p3}
Let $\phi\in H^{\alpha+2}(\Omega)$ with $\alpha=1,2$. Then, there holds
\begin{align}\label{p3}
|\Delta_w(Q_h\phi)|_{H^{\alpha}(\T_h)}\leq Ch^{\min\{k+1-\alpha,0\}}|\phi|_{H^{\alpha+2}(\Omega)}.
\end{align}
\end{lemma}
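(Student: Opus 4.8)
The plan is to split $\Delta_w(Q_h\phi)$ as $\pi_h\Delta\phi$ plus the polynomial remainder $r_h:=\Delta_w(Q_h\phi)-\pi_h\Delta\phi$, estimate the broken seminorm $|\cdot|_{H^\alpha(\T_h)}$ of each piece, and add them by the triangle inequality. The tools I will use are estimate (\ref{z3}) of Lemma \ref{l2}, applied with $\gamma=\alpha$ (admissible because $\phi\in H^{\alpha+2}(\Omega)$ and $\alpha\in\{1,2\}$, so $\gamma>0$); the element-wise inverse inequality on $\mathbb{P}_{k+3}(K)$; the $L^2$-stability and approximation property of the projection $\pi_h$; and quasi-uniformity of $\T_h$, so that $h_K\simeq h$.

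For the remainder, note that $r_h|_K\in\mathbb{P}_{k+3}(K)$, so the inverse inequality gives $|r_h|_{H^\alpha(K)}\le Ch_K^{-\alpha}\|r_h\|_{L^2(K)}$. Squaring, summing over $K\in\T_h$, replacing $h_K$ by $h$, and using (\ref{z3}) with $\gamma=\alpha$ yields
\[
|r_h|_{H^\alpha(\T_h)}\le Ch^{-\alpha}\,h^{\min\{k+1,\alpha\}}\,|\phi|_{H^{\alpha+2}(\Omega)}=Ch^{\min\{k+1-\alpha,\,0\}}\,|\phi|_{H^{\alpha+2}(\Omega)},
\]
since $\min\{k+1,\alpha\}-\alpha=\min\{k+1-\alpha,0\}$.

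For the term $\pi_h\Delta\phi$ I would prove the $h$-uniform stability bound $|\pi_h\Delta\phi|_{H^\alpha(\T_h)}\le C|\Delta\phi|_{H^\alpha(\Omega)}\le C|\phi|_{H^{\alpha+2}(\Omega)}$; since $\min\{k+1-\alpha,0\}\le 0$ we have $1\le h^{\min\{k+1-\alpha,0\}}$ for $h\le 1$, so this piece also fits the asserted bound and the proof is complete. To obtain the stability bound, fix $K$ and let $P_K\in\mathbb{P}_{\alpha-1}(K)\subset\mathbb{P}_{k+3}(K)$ be the $L^2(K)$-projection of $\Delta\phi$ onto $\mathbb{P}_{\alpha-1}(K)$. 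Then $\pi_hP_K=P_K$, $|P_K|_{H^\alpha(K)}=0$, and $\|\Delta\phi-P_K\|_{L^2(K)}\le Ch_K^{\alpha}|\Delta\phi|_{H^\alpha(K)}$, whence
\[
|\pi_h\Delta\phi|_{H^\alpha(K)}=|\pi_h(\Delta\phi-P_K)|_{H^\alpha(K)}\le Ch_K^{-\alpha}\|\pi_h(\Delta\phi-P_K)\|_{L^2(K)}\le Ch_K^{-\alpha}\|\Delta\phi-P_K\|_{L^2(K)}\le C|\Delta\phi|_{H^\alpha(K)};
\]
summing over $K$ and noting $|\Delta\phi|_{H^\alpha(\Omega)}\le C|\phi|_{H^{\alpha+2}(\Omega)}$ finishes this step. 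The only delicate point — and the main obstacle — is exactly this middle estimate: a naive inverse inequality would give only $Ch^{-\alpha}\|\Delta\phi\|_{L^2(\Omega)}$, which is too weak, so one must compare $\pi_h\Delta\phi$ against a low-degree polynomial it reproduces; everything else is bookkeeping with (\ref{z3}) and inverse inequalities.
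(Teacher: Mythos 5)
Your proof is correct, but it takes a genuinely different route from the paper's. You decompose the \emph{output} as $\Delta_w(Q_h\phi)=\pi_h\Delta\phi+r_h$, control the polynomial remainder $r_h$ by the inverse inequality together with estimate (\ref{z3}) applied with $\gamma=\alpha$, and handle the main term through a local $H^{\alpha}$-stability bound for $\pi_h$ obtained by subtracting the $L^2(K)$-projection onto $\mathbb{P}_{\alpha-1}(K)$ (which $\pi_h$ reproduces and whose $H^{\alpha}$-seminorm vanishes) --- you correctly identified that a naive inverse estimate on $\pi_h\Delta\phi$ would lose a factor $h^{-\alpha}$. The paper instead decomposes the \emph{input} as $\phi=(\phi-\mathcal{P}_h^{k+2}\phi)+\mathcal{P}_h^{k+2}\phi$ and exploits two auxiliary facts proved for this purpose: that $\Delta_wQ_h$ reduces to the ordinary Laplacian on piecewise polynomials of degree $k+2$ (Lemma \ref{lem:p1}), so that $\Delta_wQ_h(\mathcal{P}_h^{k+2}\phi)=\Delta(\mathcal{P}_h^{k+2}\phi)\in\mathbb{P}_k(\T_h)$ and its $H^{\alpha}$-seminorm either vanishes (when $\alpha>k$) or is bounded via a further triangle inequality; and an $L^2$ bound for $\Delta_wQ_h(\phi-\mathcal{P}_h^{k+2}\phi)$ (Lemma \ref{lem:p2}), which is then converted with the inverse inequality. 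Your argument is more economical in that it reuses (\ref{z3}) directly and dispenses with Lemmas \ref{lem:p1} and \ref{lem:p2} entirely, at the price of supplying the (standard, but not stated in the paper) $H^{\alpha}$-stability of the $L^2$ projection on quasi-uniform meshes; the paper's route makes the consistency of the weak Laplacian on polynomials explicit, which is of some independent interest. Both yield the identical exponent $\min\{k+1-\alpha,0\}$.
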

\begin{proof}
The proof is given in Appendix.
\end{proof}

\begin{lemma}\label{l4}
Assume $u\in H^{k+3}(\Omega)$ and $\phi\in H^{\alpha+2}(\Omega)$ with $\alpha=1,2$. Then for $k\geq 0$, there holds
\begin{align}
\label{a1}
|\ell_1(u, Q_h\phi)|&\leq Ch^{\min\{2k+2,k+1+\alpha\}}|u|_{H^{k+3}(\Omega)}|\phi|_{H^{\alpha+2}(\Omega)},\\
\label{a2}
|\ell_2(u, Q_h\phi)|&\leq Ch^{\min\{2k+2,k+1+\alpha\}}|u|_{H^{k+3}(\Omega)}|\phi|_{H^{\alpha+2}(\Omega)}.
\end{align}
\end{lemma}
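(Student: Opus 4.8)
The plan is to estimate $\ell_1(u,Q_h\phi)$ and $\ell_2(u,Q_h\phi)$ by refining the bounds of Lemma~\ref{l3}, exploiting an extra $L^2$-projection orthogonality that was not used there. For $\ell_1$, recall from (\ref{el1}) that $\ell_1(u,Q_h\phi)=(\Delta_w(Q_hu)-\pi_h\Delta u,\ \Delta_w(Q_h\phi))_{\T_h}$. One factor, $\Delta_w(Q_hu)-\pi_h\Delta u$, is already controlled by (\ref{z3}) of Lemma~\ref{l2} with $\gamma=k+1$, giving the factor $h^{k+1}|u|_{H^{k+3}(\Omega)}$. For the other factor I would not simply use $\3bar Q_h\phi\3bar$; instead, since $\Delta_w(Q_hu)-\pi_h\Delta u$ is a piecewise polynomial in $\mathbb{P}_{k+3}(\T_h)$ and $\pi_h$ is the $L^2$ projection onto that space, I can write
\[
\ell_1(u,Q_h\phi)=(\Delta_w(Q_hu)-\pi_h\Delta u,\ \Delta_w(Q_h\phi)-\pi_h\Delta\phi)_{\T_h}
+(\Delta_w(Q_hu)-\pi_h\Delta u,\ \pi_h\Delta\phi)_{\T_h}.
\]
The first term is bounded by (\ref{z3}) applied to both $u$ (with $\gamma=k+1$) and $\phi$ (with $\gamma=\alpha$), yielding $h^{k+1}\cdot h^{\min\{k+1,\alpha\}}=h^{k+1+\min\{k+1,\alpha\}}$. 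For the second term I would rewrite it, using that $\pi_h\Delta u$ has the same projection, as $(\Delta_w(Q_hu)-\Delta\pi_h u,\ \pi_h\Delta\phi)$ plus correction terms, or more directly use (\ref{key}) to express $\Delta_w(Q_hu)$ and integrate by parts back against a smooth representative of $\Delta\phi$; the orthogonality $\langle\cdot,\cdot\rangle$-face terms then produce additional powers of $h$. This is the step I expect to be the main obstacle: squeezing the extra order out of the $\pi_h\Delta\phi$ piece requires carefully reintroducing $\Delta u$ (not just $\pi_h\Delta u$) and using the Scott–Zhang face-orthogonality (\ref{p-fm}) together with the trace and inverse inequalities, rather than a brute-force Cauchy–Schwarz.

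For $\ell_2$, from (\ref{el2}) we have $\ell_2(u,Q_h\phi)=\langle\Delta u-\pi_h\Delta u,\ (\nabla Q_0\phi-Q_n(\tfrac{\partial\phi}{\partial\bn_e})\bn_e)\cdot\bn\rangle_{\partial\T_h}$. The crucial observation is that $(\nabla Q_0\phi-Q_n(\tfrac{\partial\phi}{\partial\bn_e})\bn_e)\cdot\bn=\tfrac{\partial}{\partial\bn}(Q_0\phi)-Q_n(\tfrac{\partial\phi}{\partial\bn})$, which is precisely the quantity estimated in (\ref{z2}) of Lemma~\ref{l2}, so by Cauchy–Schwarz
\[
|\ell_2(u,Q_h\phi)|\leq\Big(\sum_{K\in\T_h}h_K\|\Delta u-\pi_h\Delta u\|_{L^2(\pK)}^2\Big)^{1/2}
\Big(\sum_{K\in\T_h}h_K^{-1}\big\|\tfrac{\partial}{\partial\bn}(Q_0\phi)-Q_n(\tfrac{\partial\phi}{\partial\bn})\big\|_{L^2(\pK)}^2\Big)^{1/2}.
\]
Applying (\ref{mmm1}) with $w=u$, $\gamma=k+1$ gives $h^{\min\{k+4,k+1\}}|u|_{H^{k+3}(\Omega)}=h^{k+1}|u|_{H^{k+3}(\Omega)}$, and applying (\ref{z2}) with $w=\phi$, $\gamma=\alpha$ gives $h^{\min\{k+1,\alpha\}}|\phi|_{H^{\alpha+2}(\Omega)}$. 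Multiplying yields $h^{k+1+\min\{k+1,\alpha\}}=h^{\min\{2k+2,\,k+1+\alpha\}}$, which is exactly (\ref{a2}). This $\ell_2$ estimate is essentially a direct substitution into earlier lemmas and should be routine.

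Thus the overall structure is: (i) decompose each $\ell_i(u,Q_h\phi)$ into a "fully projected" piece and a remainder; (ii) bound the fully projected pieces by pairing (\ref{z3})/(\ref{mmm1}) for $u$ against (\ref{z3})/(\ref{z2}) for $\phi$, which automatically delivers the product of rates $h^{k+1}\cdot h^{\min\{k+1,\alpha\}}$; (iii) for $\ell_1$, handle the remainder $(\Delta_w(Q_hu)-\pi_h\Delta u,\ \pi_h\Delta\phi)$ by returning to identity (\ref{key}), integrating by parts, and using the face orthogonality (\ref{p-fm}) of the Scott–Zhang operator so that $\Delta\phi$ can be replaced by $\Delta\phi-\pi_h\Delta\phi$ (or a local average) on each edge, recovering the missing order. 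The main technical care is in ensuring no rate is lost in step (iii); everything else is a bookkeeping exercise combining Lemmas~\ref{lem:happy}, \ref{l2}, and the standard trace/inverse inequalities, together with the regularity taken in (\ref{reg}) only implicitly through the norms $|\phi|_{H^{\alpha+2}(\Omega)}$.
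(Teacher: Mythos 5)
Your treatment of $\ell_2$ is correct and is exactly the paper's argument: Cauchy--Schwarz, (\ref{mmm1}) for $u$ with $\gamma=k+1$, and (\ref{z2}) for $\phi$ with $\gamma=\alpha$. For $\ell_1$, however, there is a genuine gap. Your decomposition splits off $(\Delta_w(Q_hu)-\pi_h\Delta u,\ \pi_h\Delta\phi)_{\T_h}$, and this remainder is precisely the part you leave as a sketch while admitting it is ``the main obstacle.'' Tracing it through (\ref{key}) and one integration by parts gives
\begin{equation*}
(\Delta_w(Q_hu)-\pi_h\Delta u,\ \pi_h\Delta\phi)_{\T_h}
=-(\nabla(Q_0u-u),\ \nabla\pi_h\Delta\phi)_{\T_h}
+\langle Q_n(\tfrac{\partial u}{\partial\bn})-\tfrac{\partial u}{\partial\bn},\ \pi_h\Delta\phi\rangle_{\partial\T_h}.
\end{equation*}
The boundary term is fine (insert $\Delta\phi$, which pairs to zero across interior edges, and use (\ref{mmm1}) for $\phi$). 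But the volume term does not close by the tools you invoke: $\nabla\pi_h\Delta\phi$ lives in $\mathbb{P}_{k+2}(\T_h)$, so brute-force Cauchy--Schwarz yields only $h^{k+2}|u|_{H^{k+3}}|\phi|_{H^{3}}$, which is one order short of the required $h^{k+3}$ when $\alpha=2$ and $k\geq 1$; and the face orthogonality (\ref{p-fm}) only kills moments against $\mathbb{P}_k(e)$, whereas after a second integration by parts you face $\langle Q_0u-u,\ \partial_{\bn}\pi_h\Delta\phi\rangle_{\pK}$ with $\partial_{\bn}\pi_h\Delta\phi\in\mathbb{P}_{k+2}(e)$, so it does not vanish and needs additional edge-approximation estimates that you do not supply.

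The paper avoids this by projecting the \emph{other} factor, and onto a much lower-order space: it writes $\ell_1(u,Q_h\phi)=T_1+T_2$ with $\widehat\phi_h=\mathcal{P}_h^{\alpha-1}(\Delta_w(Q_h\phi))\in\mathbb{P}_{\alpha-1}(\T_h)$, $\alpha-1\in\{0,1\}$. Then in the analogue of your troublesome term, $\nabla\widehat\phi_h$ is piecewise constant, so $-(\nabla(Q_0u-u),\nabla\widehat\phi_h)_{\T_h}$ vanishes \emph{exactly} (trivially for $\alpha=1$; via Green's formula and (\ref{p-fm}) with $p=\nabla\widehat\phi_h\cdot\bn\in\mathbb{P}_0(e)$ for $\alpha=2$), and only the boundary term $\langle Q_n(\tfrac{\partial u}{\partial\bn})-\tfrac{\partial u}{\partial\bn},\widehat\phi_h\rangle_{\partial\T_h}$ remains, handled by inserting $\Delta\phi$. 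The price of this low-order projection is the term $T_1=(\Delta_w(Q_hu)-\pi_h\Delta u,\ \phi_h-\widehat\phi_h)_{\T_h}$, which requires the $H^{\alpha}$-seminorm bound on $\Delta_w(Q_h\phi)$ of Lemma \ref{lem:p3} (proved in the Appendix) --- a tool your first term cleanly avoids by using (\ref{z3}) for $\phi$ instead. So your first term is actually a simplification over the paper's $T_1$, but your second term is where the real work lies, and as written the argument does not establish (\ref{a1}) for $\alpha=2$, $k\geq 1$. The missing idea is to replace $\pi_h\Delta\phi$ by its projection onto $\mathbb{P}_{\alpha-1}(\T_h)$ (or equivalently adopt the paper's splitting) so that the gradient term dies identically rather than having to be estimated.
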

\begin{proof}
Let $\mathcal{P}_h^{\alpha-1}$ be the $L^2$ orthogonal projection onto the piecewise polynomial space $\mathbb{P}_{\alpha-1}(\T_h)$. For simplicity, denote by $\phi_h=\Delta_w(Q_h\phi)$ and $\widehat{\phi}_h=\mathcal{P}_h^{\alpha-1}(\phi_h)$. Then,
\begin{align}\label{l1-est}
\ell_1(u, Q_h\phi)
&= (\Delta_w(Q_hu)-\pi_h\Delta u, \phi_h )_{\T_h}\nonumber\\
&= (\Delta_w(Q_hu)-\pi_h\Delta u, \phi_h-\widehat{\phi}_h )_{\T_h}
+(\Delta_w(Q_hu)-\pi_h\Delta u, \widehat{\phi}_h )_{\T_h}\nonumber\\
&=T_1+T_2.
\end{align}
Using the Cauchy-Schwarz inequality, (\ref{z3}) of Lemma \ref{l2} and (\ref{p3}), one has
\begin{align}\label{T1-est}
|T_1|&=|(\Delta_w(Q_hu)-\pi_h\Delta u, \phi_h-\widehat{\phi}_h )_{\T_h}|\nonumber\\
&\leq \|\Delta_w(Q_hu)-\pi_h\Delta u\|_{L^2(\T_h)}\|\phi_h-\widehat{\phi}_h\|_{L^2(\T_h)}\nonumber\\
&\leq Ch^{k+1}|u|_{H^{k+3}(\Omega)}\cdot h^{\alpha} |\phi_h|_{H^{\alpha}(\Omega)}\nonumber\\
&\leq Ch^{k+1+\alpha}|u|_{H^{k+3}(\Omega)}\cdot h^{\min\{k+1-\alpha,0\}}|\phi|_{H^{\alpha+2}(\Omega)}\nonumber\\
&\leq Ch^{\min\{2k+2,k+1+\alpha\}}|u|_{H^{k+3}(\Omega)} |\phi|_{H^{\alpha+2}(\Omega)}.
\end{align}
Now we turn to the estimate of the term $T_2$. Firstly, we rewrite $T_2$ as follows:
\begin{align}\label{T2}
T_2&=(\Delta_w(Q_hu)-\pi_h\Delta u, \widehat{\phi}_h )_{\T_h}\nonumber\\
&=(\Delta (Q_0u-u), \widehat{\phi}_h)_{\T_h}
+\langle Q_n(\frac{\partial u}{\partial \bn})-\frac{\partial}{\partial\bn}(Q_0u), \widehat{\phi}_h\rangle_{\partial\T_h}\nonumber\\
&=-(\nabla (Q_0u-u), \nabla\widehat{\phi}_h)_{\T_h}
+\langle Q_n(\frac{\partial u}{\partial \bn})-\frac{\partial u}{\partial\bn}, \widehat{\phi}_h\rangle_{\partial\T_h}\nonumber\\
&=J_1+J_2.
\end{align}
For the first term $J_1$, we discuss it in the following two cases:
\begin{itemize}
\item In the case of $\alpha=1$, $\nabla\widehat{\phi}_h=0$ since $\widehat{\phi}_h=\mathcal{P}_h^0(\phi_h)\in \mathbb{P}_0(\T_h)$. Therefore, $J_1=0$.
\item In the case of $\alpha=2$, $\nabla\widehat{\phi}_h$ is a piecewise constant vector due to $\widehat{\phi}_h=\mathcal{P}_h^1(\phi_h)\in \mathbb{P}_1(\T_h)$. Then, by Green's formula and (\ref{p-fm}), we get
\begin{align*}
J_1 = \sum_{K\in\T_h}-\langle Q_0u-u, \nabla\widehat{\phi}_h\cdot\bn\rangle_{\partial K}=0.
\end{align*}
\end{itemize}
Thus, in both cases $\alpha=1$ and $\alpha=2$, we have
\begin{equation}\label{J1-est}
J_1=0.
\end{equation}
As to the second term {\color{red}$J_2$}, recalling the fact
\begin{align*}
\langle Q_n(\frac{\partial u}{\partial \bn})-\frac{\partial u}{\partial\bn}, \Delta \phi\rangle_{\partial\T_h}=0,
\end{align*}
we split {\color{red}$J_2$} into the following two terms:
\begin{align*}
J_2&=\langle Q_n(\frac{\partial u}{\partial \bn})-\frac{\partial u}{\partial\bn}, \widehat{\phi}_h\rangle_{\partial\T_h}\\
&=\langle Q_n(\frac{\partial u}{\partial \bn})-\frac{\partial u}{\partial\bn}, \mathcal{P}_h^{\alpha-1}(\Delta_w(Q_h\phi)-\Delta \phi)\rangle_{\partial\T_h}\nonumber\\
&\quad+\langle Q_n(\frac{\partial u}{\partial \bn})-\frac{\partial u}{\partial\bn}, \mathcal{P}_h^{\alpha-1}(\Delta \phi)-\Delta \phi\rangle_{\partial\T_h}.
\end{align*}
And then, by Cauchy-Schwarz inequality and (\ref{z2}) of Lemma \ref{l2} with $\gamma=k+1$, we get
\begin{align}\label{J2}
|J_2|&\leq (\sum_{K\in\T_h}h_K^{-1}\|Q_n(\frac{\partial u}{\partial \bn})-\frac{\partial u}{\partial\bn}\|_{L^2(\partial K)}^2)^{1/2}(\Theta_1^{1/2}+\Theta_2^{1/2})\nonumber\\
&\leq Ch^{k+1}|u|_{H^{k+3}(\Omega)}(\Theta_1^{1/2}+\Theta_2^{1/2}),
\end{align}
where
\begin{align*}
\Theta_1&:=\sum_{K\in\T_h}h_K\|\mathcal{P}_h^{\alpha-1}(\Delta_w(Q_h\phi)-\Delta \phi)\|_{L^2(\partial K)}^2,\\
\Theta_2&:=\sum_{K\in\T_h}h_K\|\mathcal{P}_h^{\alpha-1}(\Delta \phi)-\Delta \phi\|_{L^2(\partial K)}^2.
\end{align*}
From the trace inequality and the stability of $L^2$ projection $\mathcal{P}_h^{\alpha-1}$, it follows that
\begin{align*}
\Theta_1\leq C\sum_{K\in\T_h}\|\mathcal{P}_h^{\alpha-1}(\Delta_w(Q_h\phi)-\Delta \phi)\|_{L^2(K)}^2
\leq C\|\Delta_w(Q_h\phi)-\Delta \phi\|_{L^2(\T_h)}^2.
\end{align*}
Then, by the triangle inequality and (\ref{z3}) of Lemma \ref{l2}, we arrive at
\begin{align}\label{theta1}
\Theta_1&\leq C(\|\Delta_w(Q_h\phi)-\pi_h\Delta \phi\|_{L^2(\T_h)}^2+\|\pi_h\Delta\phi-\Delta \phi\|_{L^2(\T_h)}^2)\nonumber\\
&\leq Ch^{2\min\{k+1,\alpha\}}|\phi|_{H^{\alpha+2}(\Omega)}^2.
\end{align}

It follows from the trace inequality and the approximation property of $L^2$ projection $\mathcal{P}_h^{\alpha-1}$ that
\begin{align*}
\Theta_2&\leq \sum_{K\in\T_h}(h_K\|\mathcal{P}_h^{\alpha-1}(\Delta \phi)-\Delta \phi\|_{L^2(K)}^2+h_K |\mathcal{P}_h^{\alpha-1}(\Delta \phi)-\Delta \phi|_{H^1( K)}^2)\nonumber\\
&\leq Ch^{2\alpha}|\phi|_{H^{\alpha+2}(\Omega)}^2,
\end{align*}
which together with (\ref{theta1}) and (\ref{J2}) leads to
\begin{align}\label{J2-est}
|J_2|\leq Ch^{\min\{2k+2,k+1+\alpha\}}|u|_{H^{k+3}(\Omega)}|\phi|_{H^{\alpha+2}(\Omega)}.
\end{align}
Collecting (\ref{T2}), (\ref{J1-est}) and (\ref{J2-est}) yields
\begin{align*}
|T_2|\leq Ch^{\min\{2k+2,k+1+\alpha\}}|u|_{H^{k+3}(\Omega)}|\phi|_{H^{\alpha+2}(\Omega)},
\end{align*}
which combining with (\ref{T1-est}) and (\ref{l1-est}) completed the proof of (\ref{a1}).

As to the proof of (\ref{a2}), from the Cauchy-Schwarz inequality and Lemma \ref{l2} with $\gamma=k+1$  it follows that
\begin{align*}
|\ell_2(u, Q_h\phi)|
&=\left|\sum_{T\in\T_h} \langle \Delta u-\pi_h\Delta u, \frac{\partial}{\partial \bn}
(Q_0\phi)-Q_n(\frac{\partial \phi}{\partial\bn_e})\bn_e\cdot\bn\rangle_\pK\right|\nonumber\\
&\leq\left(\sum_{K\in\T_h} h_K\|\Delta u-\pi_h\Delta u\|_{L^2(\pK)}^2\right)^{1/2}\times\nonumber\\
&\quad  \left(\sum_{K\in\T_h} h^{-1}_K
\|\frac{\partial}{\partial\bn}
(Q_0\phi)-Q_n(\frac{\partial \phi}{\partial\bn})\|_{L^2(\pK)}^2\right)^{1/2}\nonumber\\
&\leq Ch^{k+1}|u|_{H^{k+3}(\Omega)}\cdot h^{\min\{k+1,\alpha\}}|\phi|_{H^{\alpha+2}(\Omega)}\nonumber\\
&\leq C h^{\min\{2k+2,k+1+\alpha\}}|u|_{H^{k+3}(\Omega)} |\phi|_{H^{\alpha+2}(\Omega)}.
\end{align*}
The proof is completed.
\end{proof}

\begin{theorem}\label{thm2}
Let $u_h=\{u_0, u_n\bm{n}_e\}\in V_h$ be the solution
of the SF-C0WG scheme (\ref{wg}). Assume that the exact solution $u\in H^{k+3}(\Omega)$ and the regularity assumption (\ref{reg}) holds true.
 Then, there exists a constant $C$ such that
\begin{equation}\label{L2err}
\|Q_0u-u_0\|_{L^2(\Omega)} \leq Ch^{k+3-\delta_{k,0}}|u|_{H^{k+3}(\Omega)}
\end{equation}
and
\begin{equation}\label{H1err}
\|\nabla(Q_0u-u_0)\|_{L^2(\Omega)} \leq Ch^{k+2}|u|_{H^{k+3}(\Omega)}.
\end{equation}
Here $\delta_{i,j}$ is the usual Kronecker's delta with value $1$
when $i=j$ and value $0$ otherwise.
\end{theorem}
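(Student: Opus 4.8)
The plan is to run an Aubin--Nitsche duality argument twice: against the dual problem (\ref{dual})--(\ref{dual2}) with $\alpha=2$ for the $L^2$ bound, and with $\alpha=1$ for the $H^1$ bound. The key preliminary observation is that the computation leading to (\ref{mmmm}) used nothing about the right-hand side $f$ beyond the fact that $u$ solves a biharmonic problem; repeating it verbatim with $u$ replaced by the dual solution $\phi$ and $f$ by the corresponding data $\chi$ gives, for every $v=\{v_0,v_n\bn_e\}\in V_h^0$,
\[
(\chi,v_0)=\mathcal{A}_h(Q_h\phi,v)-\ell(\phi,v),
\]
where the interelement skeleton terms produced by the integrations by parts cancel exactly as in that proof (this uses $\Delta\phi\in H^1(\Omega)$, i.e.\ $\phi\in H^{\alpha+2}(\Omega)$ with $\alpha\ge1$, together with single-valuedness of $v_n$ and $v\in V_h^0$). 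Taking $v=e_h$ (which lies in $V_h^0$, since homogeneous data forces $u_h,Q_hu\in V_h^0$), using the symmetry of $\mathcal{A}_h$, and then invoking the error equation (\ref{ee}) with test function $Q_h\phi$ (which lies in $V_h^0$ because $\phi$ and $\nabla\phi\cdot\bn$ vanish on $\Gamma$, so $Q_n(\frac{\partial\phi}{\partial\bn_e})=0$ on boundary edges) yields the master identity
\[
(\chi,e_0)=\mathcal{A}_h(e_h,Q_h\phi)-\ell(\phi,e_h)=\ell(u,Q_h\phi)-\ell(\phi,e_h).
\]

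For (\ref{L2err}) I would set $\chi=e_0$ and apply (\ref{reg}) with $\alpha=2$, so the left-hand side becomes $\|e_0\|_{L^2(\Omega)}^2$ and $|\phi|_{H^4(\Omega)}\le C\|e_0\|_{L^2(\Omega)}$. The term $\ell(u,Q_h\phi)$ is estimated directly by Lemma \ref{l4} with $\alpha=2$, giving $Ch^{\min\{2k+2,k+3\}}|u|_{H^{k+3}(\Omega)}|\phi|_{H^4(\Omega)}$. For $\ell(\phi,e_h)$ I would use Lemma \ref{l3} with $w=\phi$ and $\gamma=2$, producing $Ch^{\min\{k+1,2\}}|\phi|_{H^4(\Omega)}\,\3bar e_h\3bar$, and then insert $\3bar e_h\3bar\le Ch^{k+1}|u|_{H^{k+3}(\Omega)}$ from Theorem \ref{thm1}; since $\min\{k+1,2\}+(k+1)=\min\{2k+2,k+3\}$, both contributions have the same order. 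Dividing by $\|e_0\|_{L^2(\Omega)}$ gives $\|e_0\|_{L^2(\Omega)}\le Ch^{\min\{2k+2,k+3\}}|u|_{H^{k+3}(\Omega)}$, and since $\min\{2k+2,k+3\}$ equals $k+3$ for $k\ge1$ and $2=k+3-1$ for $k=0$, this is exactly $h^{k+3-\delta_{k,0}}$.

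For (\ref{H1err}) the only change is the choice of dual data. Since $e_0=Q_0u-u_0\in S_h\subset H_0^1(\Omega)$, I would take $\chi\in H^{-1}(\Omega)$ to be the functional $\langle\chi,\zeta\rangle=(\nabla e_0,\nabla\zeta)$ for $\zeta\in H_0^1(\Omega)$, so that $\|\chi\|_{H^{-1}(\Omega)}\le C\|\nabla e_0\|_{L^2(\Omega)}$ and $(\chi,e_0)=\|\nabla e_0\|_{L^2(\Omega)}^2$; then (\ref{reg}) with $\alpha=1$ yields $|\phi|_{H^3(\Omega)}\le C\|\nabla e_0\|_{L^2(\Omega)}$. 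Now Lemma \ref{l4} with $\alpha=1$ bounds $\ell(u,Q_h\phi)$ by $Ch^{\min\{2k+2,k+2\}}|u|_{H^{k+3}(\Omega)}|\phi|_{H^3(\Omega)}=Ch^{k+2}|u|_{H^{k+3}(\Omega)}|\phi|_{H^3(\Omega)}$, while Lemma \ref{l3} with $\gamma=1$ combined with Theorem \ref{thm1} bounds $\ell(\phi,e_h)$ by $Ch\cdot h^{k+1}|u|_{H^{k+3}(\Omega)}|\phi|_{H^3(\Omega)}$. Cancelling $\|\nabla e_0\|_{L^2(\Omega)}$ gives (\ref{H1err}).

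I expect the main obstacle to be bookkeeping rather than any single estimate: one must check that the dual-data version of (\ref{mmmm}) is legitimate in the appropriate duality pairing (in particular when $\chi\in H^{-1}(\Omega)$ in the $H^1$ argument) and that all skeleton terms cancel, and then verify in each branch (each $\alpha$, and $k=0$ versus $k\ge1$) that the exponents coming from $\ell(u,Q_h\phi)$ and from $\ell(\phi,e_h)$ coincide, so no order is lost. The sub-optimality at $k=0$ in (\ref{L2err}) is intrinsic: when $\alpha=2$ the term $\ell(u,Q_h\phi)$ is only of order $h^{\min\{2k+2,k+3\}}$, which caps at $h^2$ for $k=0$, and the matching contribution $\ell(\phi,e_h)$ is limited the same way by $\3bar e_h\3bar=O(h)$, so duality cannot recover the full two orders.
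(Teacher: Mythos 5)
Your proposal is correct and follows essentially the same route as the paper: derive the dual identity $(\chi,e_0)=\ell(u,Q_h\phi)-\ell(\phi,e_h)$ by repeating the derivation of (\ref{mmmm}) for the dual problem, then bound the two terms with Lemma \ref{l4} and with Lemma \ref{l3} combined with Theorem \ref{thm1}, taking $\alpha=2$ for (\ref{L2err}) and $\alpha=1$ for (\ref{H1err}). The only cosmetic difference is that for the $H^1$ bound the paper invokes the sup characterization $\|\nabla e_0\|_{L^2}=\sup_{\chi\in H^{-1}}(\chi,e_0)/\|\chi\|_{H^{-1}}$ while you pick the maximizing functional $\zeta\mapsto(\nabla e_0,\nabla\zeta)$ explicitly; your exponent bookkeeping and the explanation of the $k=0$ suboptimality match the paper's.
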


\begin{proof}
Testing (\ref{dual}) by error function $e_0$ and then using a similar procedure as in the proof of the equation (\ref{mmmm}), we obtain
\begin{align}\label{tt}
(\chi, e_0)=(\Delta^2 \phi, e_0)_{\T_h}
=\mathcal{A}_h(e_h, Q_h\phi)-\ell(\phi,e_h).
\end{align}
The error equation (\ref{ee}) gives
\begin{align*}
\mathcal{A}_h(e_h, Q_h\phi) = \ell(u,Q_h\phi),
\end{align*}
which combining with (\ref{tt}) leads to
\begin{align}\label{z0}
(\chi, e_0)=\ell(u,Q_h\phi)-\ell(\phi,e_h).
\end{align}

In view of Lemma \ref{l4}, we infer that
\begin{align}\label{z5}
|\ell(u, Q_h\phi)| \leq Ch^{\min\{2k+2,k+1+\alpha\}}|u|_{H^{k+3}(\Omega)}|\phi|_{H^{\alpha+2}(\Omega)}.
\end{align}

Using Lemma \ref{l3} with $\gamma = \alpha$ and  Theorem \ref{thm1}, we have
\begin{align*}
|\ell(\phi,e_h)|&\leq C h^{\min\{k+1,\alpha\}} |\phi|_{H^{\alpha+2}(\Omega)}\3bar e_h\3bar\\
&\leq Ch^{\min\{2k+2,k+1+\alpha\}}|u|_{H^{k+3}(\Omega)}|\phi|_{H^{\alpha+2}(\Omega)},
\end{align*}
which combining with (\ref{z0}) and (\ref{z4}) leads to
\begin{align}\label{z4}
|(\chi, e_0)| \leq C h^{\min\{2k+2,k+1+\alpha\}}|u|_{H^{k+3}(\Omega)}|\phi|_{H^{\alpha+2}(\Omega)}.
\end{align}

For the $L^2$-norm estimate of $e_0$, taking $\chi=e_0$ in the dual problem (\ref{dual})-(\ref{dual2}), and then using the estimate of (\ref{z4}) with the $H^4$-regularity, we find
\begin{align*}
\|e_0\|_{L^2(\Omega)}^2 \leq C h^{\min\{2k+2,k+3\}}|u|_{H^{k+3}(\Omega)}|\phi|_{H^{4}(\Omega)},
\end{align*}
which together with the assumption (\ref{reg}) with $\alpha=2$:
\[\|\phi\|_{H^4(\Omega)}\leq C\|e_0\|_{L^2(\Omega)}\]
completes the proof of (\ref{L2err}).

Then using the estimate of (\ref{z4}) with the $H^3$-regularity yields
\begin{align*}
|(\chi, e_0)| \leq C h^{k+2}|u|_{H^{k+3}(\Omega)}|\phi|_{H^3(\Omega)},
\end{align*}
which together with the assumption (\ref{reg}) with $\alpha=1$:
\[\|\phi\|_{H^3(\Omega)}\leq C\|\chi\|_{H^{-1}(\Omega)}\]
leads to
\begin{align}
\|\nabla e_0\|_{L^2(\Omega)} = \sup_{\chi\in H^{-1}(\Omega)}\frac{(\chi, e_0)}{\|\chi\|_{H^{-1}(\Omega)}}
\leq C h^{k+2}|u|_{H^{k+3}(\Omega)},
\end{align}
which ends the proof of (\ref{H1err}).
\end{proof}

By the triangle inequality, from Theorem \ref{thm2} and (\ref{Q0-approxi}), we immediately obtain the $L^2$ norm and $H^1$ norm error estimates between the exact solution $u$ and its WG finite element approximation $u_0$ as follows:
\begin{corollary}
Let $u_h=\{u_0, u_n\bm{n}_e\}\in V_h$ be the solution
of the SF-C0WG scheme (\ref{wg}). Assume that the exact solution $u\in H^{k+3}(\Omega)$ and the regularity assumption (\ref{reg}) holds true.
 Then, there exists a constant $C$ such that
\begin{equation}
\|u-u_0\|_{L^2(\Omega)} \leq Ch^{k+3-\delta_{k,0}}|u|_{H^{k+3}(\Omega)}
\end{equation}
and
\begin{equation}
\|\nabla(u-u_0)\|_{L^2(\Omega)} \leq Ch^{k+2}|u|_{H^{k+3}(\Omega)}.
\end{equation}
Here $\delta_{i,j}$ is the usual Kronecker's delta with value $1$
when $i=j$ and value $0$ otherwise.
\end{corollary}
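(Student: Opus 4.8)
The plan is to obtain both bounds by splitting the total error through the Scott--Zhang interpolant and then combining Theorem~\ref{thm2} with the interpolation estimate (\ref{Q0-approxi}); no new analytic ingredient is needed. Concretely, I would write
\[
u-u_0 = (u-Q_0u)+(Q_0u-u_0),
\]
so that the second summand is precisely the quantity $e_0$ estimated in Theorem~\ref{thm2}, while the first summand is the plain interpolation error governed by (\ref{Q0-approxi}).

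For the $L^2$ estimate, first apply (\ref{Q0-approxi}) with $s=0$ and $\gamma=k+3$, which, since $\min\{k+3,\gamma\}=k+3$, gives $\|u-Q_0u\|_{L^2(\Omega)}\le Ch^{k+3}|u|_{H^{k+3}(\Omega)}$; then invoke (\ref{L2err}) for $\|Q_0u-u_0\|_{L^2(\Omega)}\le Ch^{k+3-\delta_{k,0}}|u|_{H^{k+3}(\Omega)}$. Because $h^{k+3}\le h^{k+3-\delta_{k,0}}$ for $h\le 1$, the triangle inequality yields $\|u-u_0\|_{L^2(\Omega)}\le Ch^{k+3-\delta_{k,0}}|u|_{H^{k+3}(\Omega)}$. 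The Kronecker factor is inherited solely from the finite-element contribution (\ref{L2err}), which is sub-optimal at $k=0$; the interpolation error alone is already $O(h^{k+3})$ and is therefore dominated.

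For the $H^1$ estimate, I would use the same splitting: (\ref{Q0-approxi}) with $s=1$, $\gamma=k+3$ gives $\|\nabla(u-Q_0u)\|_{L^2(\Omega)}\le Ch^{k+2}|u|_{H^{k+3}(\Omega)}$, and (\ref{H1err}) gives $\|\nabla(Q_0u-u_0)\|_{L^2(\Omega)}\le Ch^{k+2}|u|_{H^{k+3}(\Omega)}$; adding the two and relabeling the constant finishes the proof. There is essentially no obstacle here --- the analytic work lives entirely in Theorem~\ref{thm2} --- so the only point demanding care is the exponent bookkeeping: verifying that $\min\{k+3,\gamma\}$ collapses to $k+3$ when $\gamma=k+3$, and absorbing the harmless factor $h^{\delta_{k,0}}\le 1$ into the generic constant $C$.
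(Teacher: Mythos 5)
Your proposal is correct and is exactly the paper's argument: the paper derives this corollary ``immediately'' from Theorem~\ref{thm2} and the interpolation estimate (\ref{Q0-approxi}) via the triangle inequality, with the same splitting $u-u_0=(u-Q_0u)+(Q_0u-u_0)$ and the same exponent bookkeeping ($s=0$ and $s=1$ with $\gamma=k+3$). Nothing is missing.
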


\section{Numerical Experiments} \label{Sec:numeric}
In this section, we conduct some numerical experiments to verify the theoretical predication on the SF-C0WG method (\ref{wg}) and also to compare its numerical performance to the C0WG method (\ref{c0wg}) and the C0IP method (\ref{c0ipdg}).

\begin{example}
Consider the model problem (\ref{pde})-(\ref{pde-bc2}) with $\Omega=(0,1)^2$.
The source data $f$ and boundaries data $g_D$ and $g_N$ are chosen so that the exact solution is
\[u=\sin(\pi x)\sin(\pi y).\]
\end{example}
\begin{figure}[!h]
\centering
{\includegraphics[width=0.7\textwidth]{./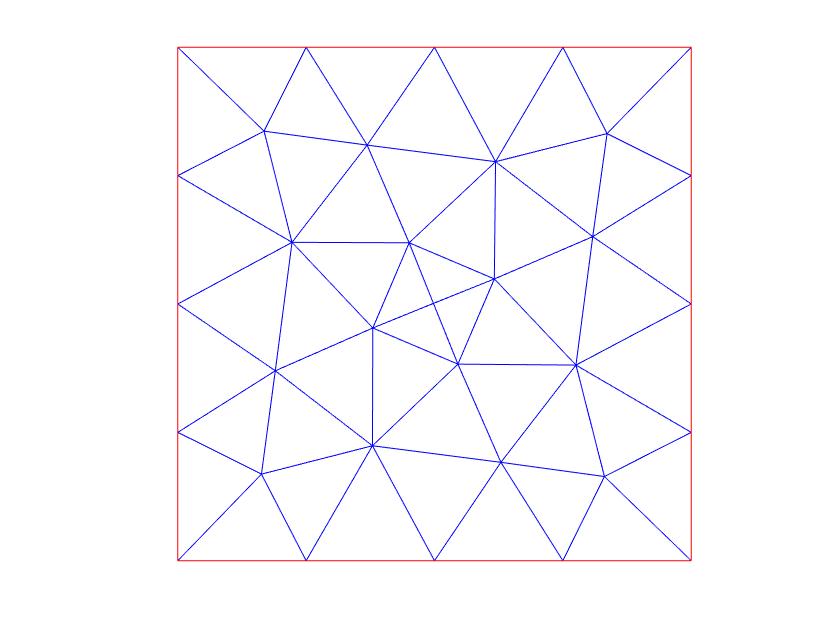}}
\caption{\label{grid1} The initial mesh. }
\label{fig:ex1-fig1}
\end{figure}

The initial mesh in our computation is shown in Figure \ref{fig:ex1-fig1}, which is generated by MATLAB function \texttt{initmesh}.
The next level of mesh is derived by uniformly refining the previous level of mesh.
The errors and the orders of convergence for the SF-C0WG method (\ref{wg}) with $k=0$ and $k=1$ are reported in Tables \ref{t1}, which confirm the theoretical predication in Theorem \ref{thm1} and Theorem \ref{thm2}.

Table \ref{t2} lists the errors and the rates of convergence for the C0WG method (\ref{c0wg}). The results in Table \ref{t1} and Table \ref{t2} show that both the SF-C0WG method and the C0WG method converge with the same rates, but the accuracy reached on a given mesh with a given polynomial degree is significant different. The SF-C0WG method is more accuracy than the C0WG method.

\begin{table}[h!]
  \centering \renewcommand{\arraystretch}{1.1}
  \caption{Error profiles and convergence rates of the SF-C0WG method.}\label{t1}
\begin{tabular}{c|c|cc|cc|cc}
\toprule[1.5pt]
$k$&level & $\3bar Q_hu-u_h\3bar$&Rate & $\|\nabla(u-u_0)\|$ &Rate  &$\|u-u_0\|$  &Rate    \\
\hline
 \multirow{5}*{0}
 &1&3.34E+00 & --&8.06E-02 & --&8.15E-03 & --\\
 &2&1.66E+00 & 1.0076&2.03E-02 & 1.9899&2.00E-03 & 2.0305\\
 &3&8.25E-01 & 1.0095&5.20E-03 & 1.9653&5.09E-04 & 1.9710\\
 &4&4.11E-01 & 1.0059&1.32E-03 & 1.9787&1.29E-04 & 1.9760\\
 &5&2.05E-01 & 1.0031&3.32E-04 & 1.9915&3.26E-05 & 1.9893\\
 \hline
 \multirow{5}*{1}
 &1&3.61E-01 & --&5.79E-03 & --&2.97E-04 & --\\
 &2&9.12E-02 & 1.9853&7.26E-04 & 2.9952&2.16E-05 & 3.7835\\
 &3&2.28E-02 & 1.9975&8.99E-05 & 3.0144&1.41E-06 & 3.9374\\
 &4&5.71E-03 & 2.0001&1.12E-05 & 3.0096&8.91E-08 & 3.9820\\
 &5&1.43E-03 & 2.0004&1.39E-06 & 3.0048&6.29E-09 & 3.8238\\
 \bottomrule[1.5pt]
\end{tabular}%
\end{table}%
\begin{table}[h!]
  \centering \renewcommand{\arraystretch}{1.1}
  \caption{Error profiles and convergence rates of the C0WG method.}\label{t2}
\begin{tabular}{c|c|cc|cc|cc}
\toprule[1.5pt]
$k$&level & $\3bar Q_hu-u_h\3bar$&Rate & $\|\nabla(u-u_0)\|$ &Rate  &$\|u-u_0\|$  &Rate    \\
\hline
 \multirow{5}*{0}
 &1&4.73E+00 & --&6.35E-01 & --&1.36E-01 & --\\
 &2&2.33E+00 & 1.0189&1.52E-01 & 2.0620&3.35E-02 & 2.0236\\
 &3&1.16E+00 & 1.0046&3.77E-02 & 2.0109&8.35E-03 & 2.0033\\
 &4&5.81E-01 & 1.0018&9.41E-03 & 2.0047&2.08E-03 & 2.0019\\
 &5&2.90E-01 & 1.0008&2.35E-03 & 2.0022&5.21E-04 & 2.0011\\
 \hline
 \multirow{5}*{1}
 &1&7.14E-01 & --&7.91E-02 & --&4.46E-03 & --\\
 &2&1.91E-01 & 1.9043&1.04E-02 & 2.9287&3.04E-04 & 3.8743\\
 &3&4.89E-02 & 1.9629&1.33E-03 & 2.9627&1.96E-05 & 3.9522\\
 &4&1.24E-02 & 1.9825&1.70E-04 & 2.9743&1.25E-06 & 3.9737\\
 &5&3.11E-03 & 1.9914&2.14E-05 & 2.9849&7.89E-08 & 3.9852\\
 \bottomrule[1.5pt]
\end{tabular}%
\end{table}%

\begin{table}[h!]
  \centering \renewcommand{\arraystretch}{1.1}
  \caption{Error profiles and convergence rates of the C0IP method.}\label{t2-2}
\begin{tabular}{c|c|cc|cc|cc}
\toprule[1.5pt]
$k$&level & $\| u-u_h\|_{dg}$&Rate & $\|\nabla(u-u_h)\|$ &Rate  &$\|u-u_h\|$  &Rate    \\
\hline
 \multirow{5}*{0}
 &1&1.33E+00 & --    &8.47E-02 & --    &1.02E-02 & --\\
 &2&6.40E-01 & 1.0595&2.24E-02 & 1.9150&2.91E-03 & 1.8035\\
 &3&3.20E-01 & 1.0009&5.92E-03 & 1.9222&7.99E-04 & 1.8635\\
 &4&1.60E-01 & 0.9955&1.52E-03 & 1.9584&2.10E-04 & 1.9310\\
 &5&8.03E-02 & 0.9983&3.86E-04 & 1.9824&5.35E-05 & 1.9689\\
 \hline
 \multirow{5}*{1}
 &1&2.00E-01 & --    &6.24E-03 & --    &3.47E-04 & --\\
 &2&5.15E-02 & 1.9533&7.83E-04 & 2.9950&2.58E-05 & 3.7489\\
 &3&1.31E-02 & 1.9780&9.62E-05 & 3.0238&1.70E-06 & 3.9194\\
 &4&3.30E-03 & 1.9870&1.19E-05 & 3.0157&1.09E-07 & 3.9712\\
 &5&8.29E-04 & 1.9930&1.48E-06 & 3.0076&6.53E-09 & 4.0564\\
 \bottomrule[1.5pt]
\end{tabular}%
\end{table}%
Table \ref{t2-2} shows the errors and the rates of convergence for the C0IP method (\ref{c0ipdg}). The errors in the first column of Table \ref{t2-2} is measured in the following $H^2$-like norm tailored for the C0IP method:
\[
 \|v\|_{dg}:=\left[\sum_{K\in\T_h}|v|_{H^2(K)}^2+\sum_{e\in\E_h}h_e^{-1}\|\ljump\nabla v\rjump\|_{L^2(e)}^2 \right]^{1/2}.
\]
The results in Table \ref{t1} and Table \ref{t2-2} show that both the SF-C0WG method and the C0IP method converge with the same rate and the accuracies are also similar when the errors are measured in $H^1$ semi-norm and $L^2$ norm.

\begin{table}[h!]
  \centering \renewcommand{\arraystretch}{1.1}
  \caption{Comparison of assembling time and solving time for the C0WG method  and the SF-C0WG method.}\label{t3}
\begin{tabular}{c|c|cc|cc}
\toprule[1.5pt]
\multirow{2}*{$k$}&\multirow{2}*{level}&\multicolumn{2}{c|}{C0WG method }&\multicolumn{2}{c}{SF-C0WG method}\\
\cline{3-6}
& & Assembling Time &Solving Time & Assembling Time &Solving Time     \\
\hline
 \multirow{5}*{0}
 &1&0.052233 & 0.001690&0.065710 & 0.003116\\
 &2&0.175486 & 0.007218&0.157510 & 0.006127\\
 &3&0.734831 & 0.039118&0.546378 & 0.030866\\
 &4&2.602549 & 0.171534&2.240196 & 0.133192\\
 &5&10.67890 & 0.874419&8.766250 & 0.628217\\
 \hline
 \multirow{5}*{1}
 &1&0.347160 & 0.027630&0.057160 & 0.016028\\
 &2&0.184210 & 0.020082&0.201938 & 0.017132\\
 &3&0.864096 & 0.072827&0.793079 & 0.061735\\
 &4&3.537430 & 0.458761&2.800155 & 0.305041\\
 &5&23.91767& 2.630822&12.14147 & 1.752295\\
 \bottomrule[1.5pt]
\end{tabular}%
\end{table}
A comparison of the assembling time and solving time for both the C0WG method and the SF-C0WG method is displayed in Table \ref{t3}. It can be observed that the assembling time and solving time for the SF-C0WG method is always smaller than that for the C0WG method.

\begin{table}[h!]
  \centering \renewcommand{\arraystretch}{1.1}
  \caption{Comparison of assembling, solving and total time for the C0IP method  and the SF-C0WG method.}\label{t4}
\begin{tabular}{c|c|cc|cc}
\toprule[1.5pt]
\multirow{2}*{Time (sec.)}&\multirow{2}*{level}&\multicolumn{2}{c|}{$k=0$ }&\multicolumn{2}{c}{$k=1$}\\
\cline{3-6}
& & {\color{red}C0IP}  &SF-C0WG  & {\color{red}C0IP} &SF-C0WG     \\
\hline
 \multirow{5}*{Assemble}
 &1&0.073452 & 0.065710&0.091383 & 0.057160\\
 &2&0.163071 & 0.157510&0.252789 & 0.201938\\
 &3&0.711536 & 0.546378&1.004696 & 0.793079\\
 &4&2.308666 & 2.240196&3.720932 & 2.800155\\
 &5&9.169572 & 8.766250&14.90928 & 12.14147\\
 \hline
 \multirow{5}*{Solve}
 &1&0.007031 & 0.003116&0.009475 & 0.016028\\
 &2&0.004608 & 0.006127&0.015663 & 0.017132\\
 &3&0.021312 & 0.030866&0.060428 & 0.061735\\
 &4&0.079957 & 0.133192&0.252089 & 0.305041\\
 &5&0.385537 & 0.628217&1.610523 & 1.752295\\
 \hline
 \multirow{5}*{Total}
 &1&0.080483 & 0.068825&0.100858 & 0.073188\\
 &2&0.167680 & 0.163636&0.268452 & 0.219071\\
 &3&0.732848 & 0.577244&1.065125 & 0.854814\\
 &4&2.388623 & 2.373388&3.973021 & 3.105196\\
 &5&9.555110 & 9.394467&16.51981 & 13.89377\\
 \bottomrule[1.5pt]
\end{tabular}%
\end{table}
The assembling time, solving time, and total time (the sum of the assembling and solving time) for both the C0IP method and the SF-C0WG method are illustrated in Table \ref{t4}. As can be seen, although the solving time of C0IP method is less than the SF-C0WG method, the assembling time and total time for the SF-C0WG method is always smaller than that for the C0IP method.

\section{Appendix} In this section, we shall introduce some technique tools which are useful in the $L^2$ and $H^1$ norm error analysis.

In order to prove Lemma \ref{lem:p3}, we introduce the following two lemmas.
\begin{lemma}\label{lem:p1}
For any $K\in\T_h$, there holds
\begin{align}\label{pp}
\Delta_w(Q_hw) = \Delta w,\quad \forall w\in \mathbb{P}_{k+2}(K).
\end{align}
\end{lemma}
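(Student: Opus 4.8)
The plan is to reduce the claim directly to the key identity (\ref{key}) by showing that the interpolation $Q_h$ reproduces the data of $w$ exactly when $w\in\mathbb{P}_{k+2}(K)$. First I would note that, since $w\in\mathbb{P}_{k+2}(K)$, the polynomial-preservation property of the Scott--Zhang operator gives $Q_0w=w$ on $K$. Next, for each edge $e\subset\pK$ the normal derivative $\nabla w\cdot\bn_e$ lies in $\mathbb{P}_{k+1}(e)$, so the $L^2(e)$-projection $Q_n$ onto $\mathbb{P}_{k+1}(e)$ leaves it unchanged: $Q_n\!\left(\frac{\partial w}{\partial\bn_e}\right)=\frac{\partial w}{\partial\bn_e}$. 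A small point worth spelling out is that $\bn_e\cdot\bn=\pm1$ is constant on $e$, so $Q_n\!\left(\frac{\partial w}{\partial\bn_e}\right)\bn_e\cdot\bn=Q_n\!\left(\frac{\partial w}{\partial\bn}\right)$, with $\frac{\partial w}{\partial\bn}=\nabla w\cdot\bn$ the genuine outward normal derivative on $\pK$.

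Then I would apply (\ref{key}) to this $w$: for every $v\in\mathbb{P}_{k+3}(K)$,
\[
(\Delta_w(Q_hw),v)_K=(\Delta Q_0w,v)_K+\left\langle Q_n\!\left(\frac{\partial w}{\partial\bn}\right)-\frac{\partial}{\partial\bn}(Q_0w),\ v\right\rangle_{\pK}.
\]
The boundary term vanishes, since $Q_0w=w$ forces $\frac{\partial}{\partial\bn}(Q_0w)=\frac{\partial w}{\partial\bn}$ and, by the preceding step, $Q_n\!\left(\frac{\partial w}{\partial\bn}\right)=\frac{\partial w}{\partial\bn}$; the volume term equals $(\Delta w,v)_K$ because $Q_0w=w$. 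Hence $(\Delta_w(Q_hw)-\Delta w,v)_K=0$ for all $v\in\mathbb{P}_{k+3}(K)$.

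Finally, since $w\in\mathbb{P}_{k+2}(K)$ implies $\Delta w\in\mathbb{P}_k(K)\subset\mathbb{P}_{k+3}(K)$, while $\Delta_w(Q_hw)\in\mathbb{P}_{k+3}(K)$ by definition, the difference $\Delta_w(Q_hw)-\Delta w$ is itself an admissible test function; choosing $v=\Delta_w(Q_hw)-\Delta w$ gives $\|\Delta_w(Q_hw)-\Delta w\|_{L^2(K)}^2=0$, which is exactly (\ref{pp}). There is no genuine obstacle here; the only care needed is the bookkeeping of $Q_n$, the edge normal $\bn_e$, and the element outward normal $\bn$ in the boundary term, to confirm it really is zero.
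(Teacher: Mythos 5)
Your proof is correct and follows essentially the same route as the paper: both reduce to showing $(\Delta_w(Q_hw)-\Delta w,v)_K=0$ for all $v\in\mathbb{P}_{k+3}(K)$ using $Q_0w=w$ and $Q_n(\partial w/\partial\bn_e)=\partial w/\partial\bn_e$, the only cosmetic difference being that you invoke the already-integrated-by-parts identity (\ref{key}) while the paper works directly from the definition (\ref{wl}) and integrates by parts at the end. Your explicit remarks on the $\bn_e\cdot\bn=\pm1$ bookkeeping and on $\Delta w\in\mathbb{P}_k(K)\subset\mathbb{P}_{k+3}(K)$ being an admissible test function are details the paper leaves implicit, and they are correct.
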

\begin{proof}
For any $w\in \mathbb{P}_{k+2}(K)$, from the definitions of $Q_0$ and $Q_n$, we have $Q_0w =w$ and $Q_n(\frac{\partial w}{\partial\bn_e})=\frac{\partial w}{\partial\bn_e}$.  Then, for any $K\in\T_h$ and $v\in\mathbb{P}_{k+3}(K)$, from the definition (\ref{wl}) of the weak laplacian, it follows that
\begin{align*}
(\Delta_wQ_hw, v)_K &= -(\nabla Q_0w, \nabla v)_K + \langle Q_n(\frac{\partial w}{\partial\bn_e})\bn_e\cdot\bn, v\rangle_{\partial K}\\
&= -(\nabla w, \nabla v)_K + \langle \frac{\partial w}{\partial\bn}, v\rangle_{\partial K}\\
&=(\Delta w, v)_K,
\end{align*}
which completes the proof.
\end{proof}

Let $\mathcal{P}_h^{k+2}: L^2(\T_h)\rightarrow \mathbb{P}_{k+2}(\T_h)$ be the element-wise defined $L^2$ orthogonal projection.
\begin{lemma}\label{lem:p2}
Assume $\phi\in H^{\alpha+2}(\Omega)$ with $\alpha=1,2$. Then, there holds
\begin{align}\label{p2}
\|\Delta_wQ_h(\phi-\mathcal{P}_h^{k+2}\phi)\|_{L^2(\T_h)}
\leq Ch^{\min\{k+1,\alpha\}}|\phi|_{H^{\alpha+2}(\Omega)}.
\end{align}
\end{lemma}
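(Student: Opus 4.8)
Set $\rho:=\phi-\mathcal{P}_h^{k+2}\phi$; the plan is to estimate $\|\Delta_w Q_h\rho\|_{L^2(\T_h)}$ by testing against itself. The identity (\ref{key}) was obtained by element-wise integration by parts, so it remains valid for any function that is only piecewise $H^2$, and applying it to $\rho$ gives, for every $v\in\mathbb{P}_{k+3}(\T_h)$,
\[
(\Delta_w Q_h\rho,\,v)_{\T_h}=(\Delta Q_0\rho,\,v)_{\T_h}+\Big\langle Q_n\big(\tfrac{\partial\rho}{\partial\bn}\big)-\tfrac{\partial}{\partial\bn}(Q_0\rho),\,v\Big\rangle_{\partial\T_h}.
\]
Choosing $v=\Delta_w Q_h\rho$ and then applying the Cauchy--Schwarz inequality, the trace inequality (\ref{trace}) and an inverse inequality (to absorb $(\sum_{K}h_K\|v\|_{L^2(\pK)}^2)^{1/2}\le C\|v\|_{L^2(\T_h)}$), I would arrive at
\[
\|\Delta_w Q_h\rho\|_{L^2(\T_h)}\ \le\ \|\Delta Q_0\rho\|_{L^2(\T_h)}+\Big(\sum_{K\in\T_h}h_K^{-1}\big\|Q_n\big(\tfrac{\partial\rho}{\partial\bn}\big)-\tfrac{\partial}{\partial\bn}(Q_0\rho)\big\|_{L^2(\pK)}^2\Big)^{1/2}=:R_1+R_2 ,
\]
so it suffices to bound $R_1+R_2$ by $Ch^{\min\{k+1,\alpha\}}|\phi|_{H^{\alpha+2}(\Omega)}$.

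For $R_1$, since $Q_0\rho\in S_h$ is a piecewise polynomial of degree $k+2$, an inverse inequality gives $\|\Delta Q_0\rho\|_{L^2(K)}\le C|Q_0\rho|_{H^2(K)}\le Ch_K^{-2}\|Q_0\rho\|_{L^2(K)}$, and then I would use the $L^2$-stability of the Scott--Zhang operator, $\|Q_0\rho\|_{L^2(K)}\le C\|\rho\|_{L^2(S_K)}$ (with $S_K$ its element patch), together with the standard $L^2$-projection estimate $\|\rho\|_{L^2(S_K)}=\|\phi-\mathcal{P}_h^{k+2}\phi\|_{L^2(S_K)}\le Ch^{\min\{k+3,\alpha+2\}}|\phi|_{H^{\alpha+2}(S_K)}$. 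Summing over $K$, using quasi-uniformity and the finite overlap of the patches, yields $R_1\le Ch^{\min\{k+1,\alpha\}}|\phi|_{H^{\alpha+2}(\Omega)}$.

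For $R_2$ I would split $Q_n(\tfrac{\partial\rho}{\partial\bn})-\tfrac{\partial}{\partial\bn}(Q_0\rho)=\big(Q_n(\tfrac{\partial\rho}{\partial\bn})-\tfrac{\partial\rho}{\partial\bn}\big)+\tfrac{\partial}{\partial\bn}(\rho-Q_0\rho)$. The normal derivative of $\mathcal{P}_h^{k+2}\phi$ on each edge lies in $\mathbb{P}_{k+1}(e)$ and is therefore fixed by $Q_n$, so the first term equals $Q_n(\tfrac{\partial\phi}{\partial\bn})-\tfrac{\partial\phi}{\partial\bn}$ and is controlled exactly as in the proof of (\ref{z2}), namely by $2\|\tfrac{\partial}{\partial\bn}(Q_0\phi-\phi)\|_{L^2(\pK)}$, followed by the trace inequality and (\ref{Q0-approxi}). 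For the second term, the trace inequality reduces matters to bounding $|\rho-Q_0\rho|_{H^1(K)}$ and $|\rho-Q_0\rho|_{H^2(K)}$, which I would do by the triangle inequality through $\rho$: the $L^2$-projection estimates give the bounds on $\rho=\phi-\mathcal{P}_h^{k+2}\phi$, and for $Q_0\rho$ one again uses an inverse inequality combined with Scott--Zhang $L^2$-stability, as in the treatment of $R_1$. Collecting the two contributions gives $R_2\le Ch^{\min\{k+1,\alpha\}}|\phi|_{H^{\alpha+2}(\Omega)}$, which completes the proof.

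The step I expect to be most delicate is the handling of $Q_0\rho$: because $\rho=\phi-\mathcal{P}_h^{k+2}\phi$ jumps across interelement edges it is not in $H^2(\Omega)$, so the approximation property (\ref{Q0-approxi}) of $Q_0$ cannot be invoked directly, and one must instead route all the needed derivative bounds through the $L^2$-stability of the Scott--Zhang operator together with inverse inequalities on the finite element function $Q_0\rho$. The remaining ingredients—Cauchy--Schwarz, the trace inequality (\ref{trace}), and the standard approximation properties of the element-wise $L^2$ projection—are routine.
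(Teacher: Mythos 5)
Your proof is essentially correct and follows the same skeleton as the paper's (apply (\ref{key}) to $\rho=\phi-\mathcal{P}_h^{k+2}\phi$, test with $v=\Delta_wQ_h\rho$, and reduce everything to the two terms $R_1$ and $R_2$), but you bound those two terms by a genuinely different route. The paper exploits the polynomial-preservation properties to eliminate $\rho$ entirely: since $Q_0$ fixes $\mathcal{P}_h^{k+2}\phi$ and $Q_n$ fixes its normal derivative on each edge, one has $Q_0\rho=Q_0\phi-\mathcal{P}_h^{k+2}\phi$, so $\Delta Q_0\rho=\Delta(Q_0\phi-\phi)+\Delta(\phi-\mathcal{P}_h^{k+2}\phi)$ is handled by (\ref{Q0-approxi}) and the projection estimate, and the edge term collapses exactly to $Q_n(\frac{\partial\phi}{\partial\bn})-\frac{\partial}{\partial\bn}(Q_0\phi)$, which is precisely (\ref{z2}). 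You instead keep $\rho$ and route the derivative bounds on $Q_0\rho$ through inverse inequalities plus stability of the Scott--Zhang operator; the arithmetic ($h^{-2}\cdot h^{\min\{k+3,\alpha+2\}}=h^{\min\{k+1,\alpha\}}$, etc.) checks out and you correctly diagnose why (\ref{Q0-approxi}) cannot be applied to the discontinuous $\rho$. The one ingredient you should tighten is the claimed $L^2$-stability of $Q_0$: the classical Scott--Zhang operator is only locally stable in the form $\|Q_0v\|_{L^2(K)}\le C(\|v\|_{L^2(S_K)}+h_K|v|_{H^1(S_K)})$ (its defining functionals are edge-trace integrals, so pure $L^2$-stability fails). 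This does not break your argument, because $|\rho|_{H^1(S_K)}\le Ch^{\min\{k+2,\alpha+1\}}|\phi|_{H^{\alpha+2}(S_K)}$ supplies exactly the extra power of $h$ needed, but as written the step invokes a property the operator does not have. The paper's preservation-identity trick avoids this issue altogether and also reuses (\ref{z2}) verbatim, which makes it the shorter argument; your version is more self-contained in that it leans only on generic inverse/stability/approximation machinery.
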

\begin{proof}
For simplicity, denote by $w=\phi-\mathcal{P}_h^{k+2}\phi$. It follows from (\ref{key}) and the Cauchy-Schwarz inequality that
\begin{align}\label{aa2}
(\Delta_wQ_hw, v)_{\T_h} &= (\Delta Q_0w, v)_{\T_h}
+\langle Q_n(\frac{\partial w}{\partial\bn})-\frac{\partial }{\partial\bn}(Q_0w), v\rangle_{\partial {\T_h}}\nonumber\\
&\leq \|\Delta Q_0w\|_{L^2(\T_h)}\|v\|_{L^2(\T_h)}\nonumber\\
&\quad+(\sum_{K\in\T_h}h_K^{-1}\|Q_n(\frac{\partial w}{\partial\bn})-\frac{\partial }{\partial\bn}(Q_0w)\|_{\partial K}^2)^{1/2}
(\sum_{K\in\T_h}h_K\|v\|_{\partial K}^2)^{1/2}\nonumber\\
&\leq C(\|\Delta Q_0w\|_{L^2(\T_h)}+(\sum_{K\in\T_h}h_K^{-1}\|Q_n(\frac{\partial w}{\partial\bn})-\frac{\partial }{\partial\bn}(Q_0w)\|_{\partial K}^2)^{1/2})\|v\|_{L^2(\T_h)},
\end{align}
for any $v\in\mathbb{P}_{k+3}(\T_h)$.

Letting $v=\Delta_wQ_hw$ in (\ref{aa2}), and then cancelling out $\|\Delta_wQ_hw\|_{L^2(\T_h)}$ from both sides yields
\begin{align}\label{aa3}
\|\Delta_wQ_hw\|_{\T_h}\leq C[\|\Delta Q_0w\|_{L^2(\T_h)}+(\sum_{K\in\T_h}h_K^{-1}\|Q_n(\frac{\partial w}{\partial\bn})-\frac{\partial }{\partial\bn}(Q_0w)\|_{\partial K}^2)^{1/2}].
\end{align}

Since the interpolant $Q_0$ preserves polynomials of degree up to $k+2$, it is easy to know
\[
  Q_0(\mathcal{P}_h^{k+2}\phi)=\mathcal{P}_h^{k+2}\phi.
\]
Then, by the triangle inequality, we have
\begin{align}\label{aa4}
\|\Delta Q_0w\|_{L^2(\T_h)}
&=\|\Delta Q_0(\phi-\mathcal{P}_h^{k+2}\phi)\|_{L^2(\T_h)}\nonumber\\
&\leq \|\Delta (Q_0\phi-\phi)\|_{L^2(\T_h)}+\|\Delta(\phi-\mathcal{P}_h^{k+2}\phi)\|_{L^2(\T_h)}\nonumber\\
&\leq Ch^{\min\{k+1,\alpha\}}|\phi|_{H^{\alpha+2}(\Omega)}.
\end{align}
Since $Q_n$ and $Q_0$ preserve the polynomials of order $k+1$ and $k+2$ respectively, there holds
\begin{align*}
Q_n(\frac{\partial w}{\partial\bn})-\frac{\partial }{\partial\bn}(Q_0w)
&=Q_n(\frac{\partial }{\partial\bn}(\phi-\mathcal{P}_h^{k+2}\phi))-
\frac{\partial }{\partial\bn}(Q_0(\phi-\mathcal{P}_h^{k+2}\phi))\\
&=Q_n(\frac{\partial \phi}{\partial\bn})-
\frac{\partial }{\partial\bn}(Q_0\phi),
\end{align*}
which together with (\ref{z2}) of Lemma \ref{l2} leads to
\begin{align}\label{aa5}
\sum_{K\in\T_h}h_K^{-1}\|Q_n(\frac{\partial w}{\partial\bn})-\frac{\partial }{\partial\bn}(Q_0w)\|_{\partial K}^2
&=\sum_{K\in\T_h}h_K^{-1}\|Q_n(\frac{\partial \phi}{\partial\bn})-
\frac{\partial }{\partial\bn}(Q_0\phi)\|_{\partial K}^2\nonumber\\
&\leq Ch^{2\min\{k+1,\alpha\}}|\phi|_{H^{\alpha+2}(\Omega)}^2.
\end{align}
Combining the estimates of (\ref{aa3}), (\ref{aa4}) and (\ref{aa5}) completes the proof of (\ref{p2}).
\end{proof}


Now, we are ready to give the proof of Lemma \ref{lem:p3} below.
\begin{proof}
 In view of (\ref{pp}) of Lemma \ref{lem:p1}, we have
\[\Delta_w Q_h(\mathcal{P}_h^{k+2}\phi)=\Delta(\mathcal{P}_h^{k+2}\phi)\]
on each element $K$ of $\T_h$.

If $\alpha>k$, we have $|\Delta(\mathcal{P}_h^{k+2}\phi)|_{H^{\alpha}(\T_h)}=0$ since $\Delta(\mathcal{P}_h^{k+2}\phi)\in\mathbb{P}_k(\T_h)$. Therefore, by the triangle inequality, we have
\begin{align}\label{aa1}
|\Delta_w(Q_h\phi)|_{H^{\alpha}(\T_h)}
&=|\Delta_wQ_h(\phi-\mathcal{P}_h^{k+2}\phi)+\Delta(\mathcal{P}_h^{k+2}\phi)|_{H^{\alpha}(\T_h)}\nonumber\\
&\leq|\Delta_wQ_h(\phi-\mathcal{P}_h^{k+2}\phi)|_{H^{\alpha}(\T_h)}
+|\Delta(\mathcal{P}_h^{k+2}\phi)|_{H^{\alpha}(\T_h)}\nonumber\\
&=|\Delta_wQ_h(\phi-\mathcal{P}_h^{k+2}\phi)|_{H^{\alpha}(\T_h)}.
\end{align}
Then, from the inverse inequality, (\ref{aa1}) and (\ref{p2}) of Lemma \ref{lem:p2}, it follows that
\begin{align*}
|\Delta_w(Q_h\phi)|_{H^{\alpha}(\T_h)}
&\leq Ch^{-\alpha}\|\Delta_wQ_h(\phi-\mathcal{P}_h^{k+2}\phi)\|_{L^2(\T_h)}\nonumber\\
&\leq Ch^{\min\{k+1-\alpha,0\}}|\phi|_{H^{\alpha+2}(\Omega)}.
\end{align*}

If $\alpha\leq k$, from the triangle inequality, the inverse inequality and (\ref{p2}) of Lemma \ref{lem:p2}, we can infer that
\begin{align*}
&|\Delta_w(Q_h\phi)|_{H^{\alpha}(\T_h)}\nonumber\\
&\leq |\Delta_wQ_h(\phi-\mathcal{P}_h^{k+2}\phi)|_{H^{\alpha}(\T_h)}
+|\Delta(\phi-\mathcal{P}_h^{k+2}\phi)|_{H^{\alpha}(\T_h)}
+|\Delta\phi|_{H^{\alpha}(\T_h)}
\nonumber\\
&\leq Ch^{-\alpha}\|\Delta_wQ_h(\phi-\mathcal{P}_h^{k+2}\phi)\|_{L^2(\T_h)}
+Ch^{\min\{k+1-\alpha,0\}}|\phi|_{H^{\alpha+2}(\T_h)}\nonumber\\
&\leq Ch^{\min\{k+1-\alpha,0\}}|\phi|_{H^{\alpha+2}(\T_h)}.
\end{align*}
Therefore, in all cases, we have
\begin{align*}
|\Delta_w(Q_h\phi)|_{H^{\alpha}(\T_h)}
\leq Ch^{\min\{k+1-\alpha,0\}}|\phi|_{H^{\alpha+2}(\T_h)},
\end{align*}
as desired.
\end{proof}

\end{document}